\theoremstyle{plain}
    \newtheorem{thm}{Theorem}[section]
    \newtheorem{corollary}[thm]{Corollary}
    \newtheorem{lemma}[thm]{Lemma}
    \newtheorem{question}[thm]{Question}
    \newtheorem{theorem}[thm]{Theorem}
\theoremstyle{definition}
    \newtheorem{definition}[thm]{Definition}
    \newtheorem{remark}[thm]{Remark}
\theoremstyle{remark}
    \newtheorem{setup}[thm]{}
\newcommand{\C}{\mathbb{C}}
\newcommand{\PP}{\mathbb{P}}
\newcommand{\Q}{\mathbb{Q}}
\newcommand{\Z}{\mathbb{Z}}
\newcommand{\SG}{\mathcal{G}}
\newcommand{\aff}{\operatorname{aff}}
\newcommand{\ant}{\operatorname{ant}}
\newcommand{\Aut}{\operatorname{Aut}}
\newcommand{\Bir}{\operatorname{Bir}}
\newcommand{\GL}{\operatorname{GL}}
\newcommand{\Fd}{\operatorname{F_d}}
\newcommand{\Ker}{\operatorname{Ker}}
\newcommand{\NS}{\operatorname{NS}}
\newcommand{\Bd}{\operatorname{Bd}}
\newcommand{\Rk}{\operatorname{Rk_f}}
\newcommand{\Rank}{\operatorname{Rank}}
\newcommand{\Con}{\operatorname{C}}
\newcommand{\J}{\operatorname{J}}
\newcommand{\N}{\operatorname{N}}
\newcommand{\Se}{\operatorname{S}}
\newcommand{\Hilb}{\operatorname{Hilb}}
\begin{document}

\title[Jordan property]
{Jordan property for non-linear algebraic groups and projective varieties}

\author{Sheng Meng, De-Qi Zhang}

\address
{
\textsc{Department of Mathematics} \endgraf
\textsc{National University of Singapore, 10 Lower Kent Ridge Road,
Singapore 119076
}}
\email{ms@u.nus.edu}
\address
{
\textsc{Department of Mathematics} \endgraf
\textsc{National University of Singapore, 10 Lower Kent Ridge Road,
Singapore 119076
}}
\email{matzdq@nus.edu.sg}

\begin{abstract}
A century ago, Camille Jordan proved that the complex general linear group $\GL_n(\C)$ has the Jordan property:
there is a Jordan constant $\Con_n$ such that every finite subgroup $H \le \GL_n(\C)$
has an abelian subgroup $H_1$ of index $[H : H_1] \le \Con_n$.
We show that every connected algebraic group $G$ (which is not necessarily linear) has the Jordan property with
the Jordan constant depending only on $\dim G$, and that the full automorphism group $\Aut(X)$ of every
projective variety $X$ has the Jordan property.
\end{abstract}

\subjclass[2010]{
%32H50, %iteration problem,
14J50, %Automorphisms of surfaces and higher-dimensional varieties
32M05. %Complex Lie groups, automorphism groups acting on complex spaces
%11G10, %Abelian varieties of dimension >1
%37B40 %Topological entropy
}

\keywords{Automorphism groups of projective varieties, Jordan property for groups}

%\thanks{The second-named author is supported by an ARF of NUS}

\maketitle

\section{Introduction}

We work over an algebraically closed field $k$ of characteristic zero unless explicitly stated otherwise.
A group $G$ is a {\it Jordan group} if there is a constant $\J$, called a {\it Jordan constant},
such that $G$ satisfies the following {\it Jordan property}:
every finite subgroup $H$ of $G$ has an abelian subgroup $H_1$
with the index $r := [H : H_1] \le \J$ (cf. \cite[Definition 1]{Po}).
Equivalently, we may even require $H_1$ to be normal in $H$, though we will {\it not} require so in this paper.
Indeed, consider the action of $H$ on $H/H_1$ by left multiplication.
This yields a homomorphism of $H$ to the symmetric group on $r$ letters.
The kernel of this homomorphism is a normal subgroup $H_2$ of $H$, of index dividing $r!$, and contained in $H_1$.
%Indeed,
%write $H = \cup_{i=1}^r \, g_i H_1$; then
%$H_2 := \cap_{i=1}^r (g_i^{-1} H_1 g_i)$ is normal in $H$;
%the natural (set-theoretical) injective map
%$$H/H_2 \to \prod_{i=1}^r H/(g_i^{-1} H_1 g_i), \hskip 1pc gH_2 \mapsto \prod_{i=1}^r g(g_i^{-1} H_1 g_i)$$
%implies that $[H: H_2] \le r^r \le \J^{\J}$.
%In fact, one can show that $[H: H_2] \le r! \le \J!$.
Define $\J(G)$ to be the
{\it smallest Jordan constant} for $G$; hence $G$ is Jordan if and only if $\J(G) < \infty$.
A family $\mathcal{G}$ of groups is
{\it uniformly Jordan} if there is a constant, denoted as $\J(\mathcal{G})$,
serving as a Jordan constant for every group in the family $\mathcal{G}$.
The question below was asked by Professor V.~L.~Popov.

\begin{question}\label{JQ} (cf.~\cite[\S 2, Problem A]{Po})
Let $X$ be a projective variety of dimension $n$.
Is the full automorphism group $\Aut(X)$ of $X$ a Jordan group?
\end{question}

For projective surfaces $X$, Question \ref{JQ} has been affirmatively answered by
V.~L.~Popov (except when $X$ is birational to $E \times \PP^1$ with $E$ an elliptic curve),
and Y.~G.~Zarhin (the remaining case); see \cite{Po}, \cite{Za14}, and the references therein. In higher dimensions, Y.~Prokhorov and C.~Shramov \cite[Theorem 1.8]{PS} proved the Jordan property for the birational automorphism group $\Bir(X)$, assuming either $X$ is non-uniruled,
or $X$ has vanishing irregularity and the Borisov-Alexeev-Borisov conjecture about the boundedness of terminal Fano varieties
which has just been affirmatively confirmed by C.~Birkar; see \cite[Theorem 1.1]{Bi}.

Our approach towards Jordan property for the full automorphism group $\Aut(X)$ of a projective variety $X$
in arbitrary dimension
is more algebraic-group theoretical.
It does not use the classification of projective varieties.

If $X$ is a projective variety, a classical result of
Grothendieck says that the neutral component $\Aut_0(X)$ of
$\Aut(X)$
is an algebraic group. Conversely, by M.~Brion \cite[Theorem 1]{Br13},
every connected algebraic group of dimension $n$ is isomorphic to $\Aut_0(X)$ for some
smooth projective variety $X$ of dimension $2n$.

In general, let $H$ be a connected algebraic group which is not necessarily linear. By
the classical result of Chevalley, there is a (unique) maximal connected linear algebraic normal subgroup
$L(H)$ of $H$ and an abelian variety $A(H)$ fitting the following exact sequence
$$1 \to L(H) \to H \to A(H) \to 1.$$
By the classical result of Camille Jordan, $L(H)$ is a Jordan group. Of course, $A(H)$ is also a Jordan group.
However, the extension of two Jordan groups may not be a Jordan group (cf. \cite[\S 1.3.2]{Po}).
Nevertheless, we would like to ask:

\begin{question}\label{qHom}
Let $H$ be an algebraic group. Is $H$ a Jordan group?
\end{question}

Clearly, it suffices to consider connected algebraic groups.
By \cite{Br13}, a positive answer to Question \ref{JQ} implies a positive answer to Question \ref{qHom}.
However, not every connected algebraic group is isogenous to
the product of an abelian variety and a connected linear algebraic group, diminishing the hope
to give a positive answer to Question \ref{qHom} by using the fact that the latter two groups
are Jordan groups (cf. Lemmas \ref{pres1} and \ref{pres2}, \cite[Remark 2.5 (ii)]{Br09}).

Conversely, a positive answer to Question \ref{qHom} implies a positive answer to Question \ref{JQ} by virtue of Lemma \ref{aa0}.

As in \cite[Lemma 2.8]{PS}, the Jordan property of a group $G$ is related to the {\it bounded rank property} of finite abelian subgroups of $G$: there exists a constant $C$ such that every finite abelian subgroup $H$ of $G$ is generated by $C$ elements. Denote by $\Rk(G)$ the {\it smallest constant} of such $C$, see Definition \ref{def} for details. Similarly we may define the {\it uniformly bounded rank property} of finite abelian subgroups for a family of groups.

Now we state our main results which positively answer the above questions.

\begin{theorem}\label{thalg}
Fix an integer $n \ge 0$.
Let $\mathcal{G}$ be the family of all connected algebraic groups of dimension $n$. Then $\mathcal{G}$ is uniformly Jordan;
its finite abelian subgroups have uniformly bounded rank. (See Theorem $\ref{thalg2}$ for upper bounds of $\J(\SG)$ and $\Rk(\SG)$ as functions in $n$.)
\end{theorem}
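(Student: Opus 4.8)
The plan is to run both assertions through the Chevalley decomposition $1 \to L \to G \to A \to 1$ of a connected algebraic group $G$ of dimension $n$, where $L = L(G)$ is linear of dimension $\le n$ and $A = A(G)$ is an abelian variety of dimension $\le n$, reducing everything to two classical inputs: Jordan's theorem for the linear part, and the commutativity of abelian varieties. The uniformity in $n$ should come from the fact that a connected linear algebraic group of dimension $\le n$ has a faithful representation of some dimension $N = N(n)$ bounded in terms of $n$; it therefore embeds in $\GL_N$ and inherits the classical Jordan constant $\Con_{N}$ together with the bound ``$\le N$'' on the rank of its finite abelian subgroups.

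First I would dispose of the bounded rank statement, which is the robust half. For a finite abelian subgroup $H \le G$, the sequence $1 \to H \cap L \to H \to \overline H \to 1$ with $\overline H \le A$ exhibits $H$ as an extension of a finite abelian subgroup of an abelian variety (rank $\le 2\dim A \le 2n$, since the $m$-torsion of $A$ is $(\Z/m)^{2\dim A}$) by a finite abelian subgroup of $L$ (which, consisting of commuting semisimple elements, is diagonalizable, hence lies in a torus of $\GL_N$ and has rank $\le N(n)$). As the minimal number of generators is subadditive in extensions, this gives $\Rk(\SG) \le N(n) + 2n$.

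For the Jordan property I would instead use the finer filtration given by the anti-affine radical: by Brion's structure theory the subgroup $G_{\ant}$ is central and commutative, $G/G_{\ant}$ is linear of dimension $\le n$, and $G_{\ant} \cap L$ is a central torus $T$. Given a finite $H \le G$, Jordan's theorem applied to the linear group $G/G_{\ant}$ yields an abelian subgroup $\overline B$ of $H/(H \cap G_{\ant})$ of index $\le \Con_{N}$; let $B \le H$ be its preimage, so $[H : B] \le \Con_{N}$. Since $G_{\ant}$ is central and $[G,G] \subseteq L$, we get $[B,B] \subseteq G_{\ant} \cap L = T$, so $B$ is nilpotent of class $\le 2$ with $[B,B]$ central of bounded rank. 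The commutator then induces a nondegenerate alternating biadditive pairing $c \colon B/Z(B) \times B/Z(B) \to T$, whose source is a quotient of $\overline B$ and so has rank $\le N(n)$; a maximal isotropic subgroup of $c$ pulls back to an abelian subgroup $B_0 \le B$ of index bounded by the order of the nondegenerate part of $c$. Thus the entire problem collapses to one quantity: the exponent of $c$.

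The main obstacle is exactly to bound this exponent, that is, to rule out Heisenberg-type subgroups of unbounded order; this is unavoidable, since an abstract extension of two Jordan groups need not be Jordan. Here I would exploit that the non-commutativity of $G$ is confined to $L$: because $A = G/L$ is complete and connected while $\operatorname{Out}(L)$ is discrete, the induced outer action $A \to \operatorname{Out}(L)$ is trivial, so $A$ acts on $L$ by inner automorphisms and $G = L \cdot C_G(L)$, where the centralizer $C_G(L)$ has commutative (in fact semiabelian) identity component. Writing $x = l_1 c_1$, $y = l_2 c_2$ with $l_i \in L$ and $c_i \in C_G(L)$, the commutators separate as $[x,y] = [l_1,l_2]\,[c_1,c_2]$, so the pairing $c$ is governed by commutators inside the \emph{linear} group $L$, where Jordan's theorem in the bounded dimension $N(n)$ caps their order by a constant depending only on $n$. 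Combining this cap on the exponent with the rank bound of the second paragraph bounds the order of the nondegenerate part of $c$, hence the index $[H : B_0]$, and yields an estimate of the shape $\J(\SG) \le \Con_{N}\cdot \Con_{N}^{\,\Rk(\SG)}$, with the exact bookkeeping recorded in Theorem \ref{thalg2}.
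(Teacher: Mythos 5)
Your skeleton is actually quite close to the paper's: the paper also uses the Rosenlicht--Brion decomposition with $G_{\ant}$ central, applies the Jordan property of the affine quotient $G/G_{\ant}$ to extract $B\leq H$ of bounded index with $[B,B]$ contained in $G_{\aff}\cap G_{\ant}\leq Z(G)$, and must then bound the finite central group $[B,B]$. But precisely at this last step --- the one you yourself call ``the main obstacle'' --- your argument has a genuine gap. Your mechanism is: $G=L\cdot C_G(L)$, commutators separate as $[x,y]=[l_1,l_2][c_1,c_2]$, and ``Jordan's theorem in $\GL_{N(n)}$ caps their order.'' This does not work. First, the factors $l_i,c_i$ of $x=l_ic_i$ are arbitrary elements of the algebraic group, not of the finite group $H$; they generally have infinite order, so Jordan's theorem says nothing about them. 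Second, and more fundamentally, the Jordan property never bounds orders of commutators of finite subgroups: the dihedral groups $D_{2m}\leq \GL_2(k)$ have cyclic commutator subgroups of unbounded order, yet $\GL_2$ is Jordan. What rescues the situation is exactly the hypothesis you establish but never use: $[B,B]$ is central in the ambient \emph{connected algebraic group} $G$ (the dihedral commutator subgroup is not central in $\GL_2$). The paper's key Lemma \ref{derbound} exploits this: $[B,B]\leq (G_{\aff})^{(1)}$ injects under the Levi projection $G_{\aff}\twoheadrightarrow G_r$ (injective on finite subgroups because unipotent groups are torsion-free in characteristic zero, \ref{setup2}(6)); the image is central, hence lands in $Z\bigl((G_r)^{(1)}\bigr)$, the centre of a connected semisimple group of dimension $\le n$; and such centres have order $\le n^n$ by the classification (Lemmas \ref{zssmaller}--\ref{semibound}). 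Your proposal contains no substitute for this semisimple-centre input, and without it the exponent of your pairing $c$ --- hence the index $[B:B_0]$ --- remains unbounded as far as your argument shows.

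Two secondary problems. (i) Your justification of $G=L\cdot C_G(L)$ via ``$\operatorname{Out}(L)$ is discrete'' is false as stated: for $L=\mathbb{G}_a^d$ one has $\operatorname{Out}(L)=\Aut(L)=\GL_d(k)$. (The identity $G=G_{\aff}\cdot C_G(G_{\aff})$ is nonetheless true, being part of Rosenlicht's theorem, which you already invoked.) Likewise $G_{\ant}\cap L$ need not be a torus: anti-affine groups in characteristic zero can contain vector groups (e.g.\ universal vector extensions of abelian varieties); only the finiteness and centrality of $[B,B]$ matter, so this part is repairable. (ii) Both halves of your proof lean on the ``fact'' that every connected linear algebraic group of dimension $\le n$ embeds in $\GL_{N(n)}$ for some $N(n)$. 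This is not a classical citable statement (it amounts to an effective Ado/Birkhoff theorem plus an equivariance argument); the paper deliberately avoids it, using instead that unipotent groups have no nontrivial finite subgroups, that finite subgroups of tori of dimension $\le n$ have rank $\le n$, and that there are only finitely many semisimple groups of dimension $\le n$ (Remark \ref{rmkbound}). Your rank bound can be repaired exactly this way (project $H\cap L$ into the reductive Levi quotient, then split off the central torus, as in Lemmas \ref{redrank} and \ref{rankbound}), so the rank half is sound in substance; the Jordan half is not.
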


\begin{theorem}\label{thvar}
Fix an integer $n \ge 0$.
Let $\mathcal{G}=\{\Aut_0(X) \, | \, X$ is a projective variety of dimension $n\}$.
Then $\mathcal{G}$ is uniformly Jordan; its finite abelian subgroups have uniformly bounded rank.
(See Theorem $\ref{thvar2}$ for upper bounds of $\J(\SG)$ and $\Rk(\SG)$ as functions in $n$.)
\end{theorem}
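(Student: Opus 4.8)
The plan is to embed every finite subgroup of $\Aut_0(X)$ into a connected algebraic group whose dimension is bounded by a function of $n=\dim X$, and then to quote Theorem \ref{thalg}. One cannot apply Theorem \ref{thalg} directly to $\Aut_0(X)$, because $\dim\Aut_0(X)$ is \emph{not} bounded in terms of $n$: already for the Hirzebruch surfaces one has $\dim\Aut_0(\mathbb{F}_m)=m+5$, which is unbounded for $n=2$. The resolution of this difficulty is the observation that the unbounded growth sits entirely inside the unipotent radical, which in characteristic zero is torsion-free and hence carries no finite subgroups at all, so it is invisible to the Jordan property.

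By Grothendieck's theorem $G:=\Aut_0(X)$ is a connected algebraic group. Replacing $X$ by an $\Aut_0(X)$-equivariant resolution $\widetilde X$, and noting that a finite subgroup of $\Aut_0(X)$ is then a finite subgroup of $\Aut_0(\widetilde X)$ with $\dim\widetilde X=n$, I may assume $X$ smooth. Write the Chevalley decomposition $1\to L\to G\to A\to 1$ with $L$ linear and $A$ an abelian variety, let $U=R_u(L)$ be the unipotent radical, and set $\bar G:=G/U$. Since $L$ is normal in $G$ and $U$ is characteristic in $L$, the subgroup $U$ is normal in $G$, so $\bar G$ is again a connected algebraic group, with Chevalley decomposition $1\to M\to\bar G\to A\to 1$ in which $M=L/U$ is reductive.

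The two bounds needed are $\dim A\le n$ and $\dim M\le c(n)$ for some $c(n)=O(n^{2})$. The bound on $A$ is the Nishi--Matsumura theorem: the Albanese morphism $X\to\Alb(X)$ is $G$-equivariant, the abelian part $A$ acts on $\Alb(X)$ by translations through an abelian subvariety $A'$, and every $A$-orbit on $X$ surjects onto a translate of $A'$; hence $\dim A=\dim A'\le\dim X=n$. For $M$, a maximal torus $T\subseteq M\subseteq G$ acts faithfully on $X$, and a faithful torus action on an irreducible variety satisfies $\dim T\le\dim X$ (the generic stabilizer of a diagonalizable group coincides with the ineffective kernel, which is trivial). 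Thus $\rank M\le n$, and a connected reductive group of rank at most $n$ has dimension $O(n^{2})$; consequently $\dim\bar G=\dim M+\dim A\le N(n)$ for an explicit $N(n)=O(n^{2})$.

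Finally, because $U$ is unipotent and $\operatorname{char}k=0$, it is torsion-free, so every finite subgroup $F\le G$ meets $U$ trivially and therefore injects into $\bar G$. Hence $F$ is isomorphic to a finite subgroup of the connected algebraic group $\bar G$ of dimension at most $N(n)$. The collection of all connected algebraic groups of dimension $\le N(n)$ is the finite union, over $0\le m\le N(n)$, of the families in Theorem \ref{thalg}, so it is uniformly Jordan and its finite abelian subgroups have uniformly bounded rank, with constants depending only on $N(n)$ and hence only on $n$. Applying this to $F\hookrightarrow\bar G$ produces the required abelian subgroup of bounded index and bounded rank, proving that $\SG$ is uniformly Jordan with finite abelian subgroups of uniformly bounded rank. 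The main obstacle is precisely the dichotomy noted at the outset: one must recognise that $\dim\Aut_0(X)$ is uncontrolled and isolate the genuinely bounded invariants; of these the substantive geometric input is the Nishi--Matsumura bound on the abelian part, the reductive rank bound being soft.
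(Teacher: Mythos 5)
Your proof is correct, but it takes a genuinely different route from the paper's. The paper never forms the quotient by the unipotent radical: in Theorem \ref{thvar2} it keeps $G=\Aut_0(X)$ intact, proves the same reductive bound you use ($\dim G_r\le 4n^2$, via Lemmas \ref{torusdim}, \ref{4n2}, \ref{reddim}), bounds the anti-affine part by $\dim G_{\ant}\le 2n$ using Brion's result (Lemma \ref{abdim}), and then re-runs the whole argument of Theorem \ref{thalg2}, in which the unipotent radical is neutralized inside the Rosenlicht-decomposition Lemmas \ref{derbound} and \ref{rankbound}. You instead make Theorem \ref{thvar} a formal consequence of Theorem \ref{thalg}: $U=R_u(G_{\aff})$ is normal in $G$ and torsion-free in characteristic zero, so every finite subgroup of $G$ injects into $\bar G=G/U$, whose dimension $\dim(G_{\aff}/U)+\dim A\le 4n^2+n$ is bounded, and Theorem \ref{thalg} then applies as a black box; this shows that the obstruction described in Remark \ref{nodirect} can be circumvented rather than re-argued. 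The trade-offs: your bound $\dim A\le n$ on the Chevalley quotient needs the Nishi--Matsumura theorem (the kernel of $\Aut_0(X)\to\Alb(X)$ is affine) and hence a smooth or at least normal model, forcing the equivariant-resolution step, whereas the paper's Lemma \ref{abdim} applies to an arbitrary projective variety and yields slightly sharper constants ($t=4n^2$ rather than $4n^2+n$ inside the exponents). Two slips to repair, both cosmetic: $A$ acts neither on $X$ nor on $\Alb(X)$ --- the correct statement is that each $G$-orbit in $X$ maps onto a translate of the abelian subvariety $A'$ defined as the image of $G\to\Alb(X)$, and $\dim A=\dim A'$ holds precisely because Nishi--Matsumura makes $\Ker(G\to\Alb(X))$ affine, hence finite modulo $G_{\aff}$; also $M=L/U$ is a quotient, not a subgroup of $G$, so before quoting the faithful-torus bound you should identify $M$ with a Levi subgroup $G_r\le G_{\aff}$ (Mostow, \ref{setup2}(3)).
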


\begin{remark}\label{nodirect}
Theorem \ref{thvar} does not follow from Theorem  \ref{thalg} directly because
$\dim \Aut_0(X)$ cannot be bounded in terms of $\dim X$. For example, by \cite[Theorem 3]{Ma71},
if $\Fd$ is the Hirzebruch surface of degree $d \ge 1$
then $G_a^{d+1}$ is the unipotent radical of $\Aut_0(\Fd)$
with $G_a=(k,+)$ the additive group, and $\dim \Aut_0(\Fd) = d+5$ is not bounded.
In order to prove Theorem \ref{thvar}, we remove the influence of such unipotent radical
by key Lemma \ref{derbound}.

Let $G$ be a connected algebraic group.
Denote by $G_{\aff}$ the largest connected affine normal subgroup.
Note that $G_{\aff}=L(G)$ .
Every anti-affine subgroup of $G$ is connected and contained in the center of $G$; in particular, every such subgroup is normal in $G$.
Denote by $G_{\ant}$ the largest anti-affine subgroup.
The main ingredient of our proof is the very old and classical decomposition theorem $G = G_{\aff}\cdot G_{\ant}$
for a connected algebraic group $G$.
This is due to M.~Rosenlicht \cite[Corollary 5, p.~440]{Ro56},
see also M.~Brion \cite{Br09} (or \ref{setup2} below) for more modern elaborations.

We also use the effective (and optimal) upper bound in M.~Brion \cite[Proposition 3.2]{Br13oc}
for the dimension of the anti-affine part of $\Aut_0(X)$, see Lemma \ref{abdim}.
\end{remark}

Our last main result below is an immediate consequence of Theorem \ref{thalg} and Lemma \ref{aa0}.

\begin{theorem}\label{thaut}
Let $X$ be a projective variety. Then $\Aut(X)$ is a Jordan group.
\end{theorem}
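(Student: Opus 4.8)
The plan is to reduce the claim for the full group $\Aut(X)$ to the Jordan property of its neutral component, which is essentially the content of Lemma \ref{aa0}. By Grothendieck's theorem $\Aut_0(X)$ is a connected algebraic group, so Theorem \ref{thalg} applied with $n=\dim\Aut_0(X)$ shows that $\Aut_0(X)$ is Jordan and, crucially, that its finite abelian subgroups have bounded rank. The whole difficulty is to cross the (possibly infinite, discrete) component group $\Aut(X)/\Aut_0(X)$: as the introduction stresses, an extension of two Jordan groups need not be Jordan, so no naive argument can work.

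First I would linearize the component group via the N\'eron--Severi lattice. The group $\Aut(X)$ acts on $\NS(X)$, a finitely generated abelian group of some rank $\rho$, giving a representation $\Phi\colon\Aut(X)\to\GL(\NS(X))\cong\GL_\rho(\Z)$. By Minkowski's theorem the finite subgroups of $\GL_\rho(\Z)$ have order bounded in terms of $\rho$, so $\Phi(\Aut(X))$ has finite subgroups of uniformly bounded order and is in particular Jordan. This controls the part of the component group that is seen on $\NS(X)$.

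Next I would bound the kernel $K:=\Ker\Phi$ of automorphisms acting trivially on $\NS(X)$. Since $\Aut_0(X)$ is connected it acts trivially on the discrete group $\NS(X)$, so $\Aut_0(X)\le K$; and each element of $K$ fixes an ample class, so Lieberman's classical finiteness theorem for polarization-preserving automorphisms gives that $K/\Aut_0(X)$ is finite. Thus $K$ is a finite extension of $\Aut_0(X)$, and here the bounded-rank half of Theorem \ref{thalg} does the real work: a finite extension of a Jordan group whose finite abelian subgroups have bounded rank is again Jordan with bounded rank (this is the purpose of the preservation Lemmas \ref{pres1} and \ref{pres2}). Finally $\Aut(X)$ fits in the extension $1\to K\to\Aut(X)\to\Phi(\Aut(X))\to1$ with Jordan kernel of bounded rank and quotient of bounded finite subgroups, and one further application of the extension lemma shows that $\Aut(X)$ is Jordan.

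The step I expect to be the genuine obstacle is the extension argument, not any single geometric input: precisely because extensions of Jordan groups can fail the property, everything rests on propagating the bounded-rank conclusion (and not merely the Jordan conclusion) through every stage, which is exactly why Theorem \ref{thalg} was formulated with the rank refinement. Concretely, given a finite $H\le\Aut(X)$ one intersects $H$ with $K$, extracts inside the finite extension of $\Aut_0(X)$ an abelian subgroup of bounded index and bounded number of generators, and then uses the Minkowski bound on $H/(H\cap K)\le\GL_\rho(\Z)$ to bound the index of an abelian subgroup of $H$; the rank control is what keeps the generator count, and hence all the indices produced, uniformly bounded.
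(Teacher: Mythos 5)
Your proposal follows essentially the same route as the paper's own proof: your two steps are precisely the paper's Lemma~\ref{aa0} (the N\'eron--Severi representation, Minkowski's bound, Lieberman's finiteness theorem) followed by the Jordan property of the connected algebraic group $\Aut_0(X)$ from Theorem~\ref{thalg}. There is, however, one genuine gap in your version: you apply ``Lieberman's classical finiteness theorem'' directly to $X$, but the result cited in the paper (\cite[Proposition 2.2]{Li}) concerns compact K\"ahler \emph{manifolds}, so it does not apply when $X$ is singular. This is exactly the point the paper is careful about: it first reduces to normal $X$ (automorphisms lift to the normalization), then takes an $\Aut(X)$-equivariant resolution $\pi\colon X'\to X$, runs the N\'eron--Severi/Minkowski/Lieberman argument on the smooth $X'$, and descends back to $X$ using Brion's result that $\pi(\Aut_0(X'))\le \Aut_0(X)$. (Alternatively, Remark~\ref{Referee}, supplied by the referee, proves via Hilbert schemes that $\Aut_{[L]}(X)$ has finitely many components for an \emph{arbitrary} projective $X$.) Without one of these arguments, your assertion that $K/\Aut_0(X)$ is finite is unjustified for singular $X$. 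A cosmetic issue in the same step: $\NS(X)$ may have torsion, so $\GL(\NS(X))$ is not literally $\GL_\rho(\Z)$; the paper sidesteps this by acting on $\NS_{\mathbb{Q}}$.

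A second, non-fatal point: you locate the difficulty in ``propagating the bounded-rank conclusion,'' but that is not where the rank property is used, and it is not needed to cross the component group at all. Once you have the two index bounds (Minkowski for $H/(H\cap K)$ and Lieberman for $K/\Aut_0(X)$), the elementary Lemma~\ref{pres1}(1), namely $\J(G)\le \Bd(G_2)\cdot\J(G_1)$, assembles the proof: an abelian subgroup of index $\le \J(\Aut_0(X))$ in $H\cap\Aut_0(X)$ automatically has index $\le \Bd\cdot c\cdot \J(\Aut_0(X))$ in $H$, with no control whatsoever on its number of generators. The reason is that here the groups with bounded finite subgroups occur as \emph{quotients}. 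The rank refinement of Theorem~\ref{thalg} is needed in the opposite configuration, inside the proof of Theorem~\ref{thalg2}: there the bounded finite group $H_1^{(1)}$ is the \emph{kernel} of an extension with Jordan quotient, which is the situation requiring Lemma~\ref{pres1}(6). Your concrete final assembly is therefore correct, but for simpler reasons than you claim.
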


We remark that if $A$ is a non-trivial abelian variety and $Y$ is a non-trivial rational variety,
then $\Bir(A \times Y)$ is not a Jordan group (cf.~\cite[Corollary 1.4]{Za10}).
Hence the Jordan constant in Theorem \ref{thaut}, in general,
depends on $X$ and is not a birational invariant of $X$.

We refer to \cite{IMI} for related results.
\par \vskip 1pc \noindent
{\bf Acknowledgement.}

The authors heartily thank our friends and colleagues, and especially the referees of this paper, for providing Remark \ref{Referee}, improving Lemma \ref{4n2}, clarifying Corollary \ref{corbir}, removing the inaccuracies and many suggestions to improve the paper. The second-named author is supported by an ARF of NUS.

\section{Preliminary results}

\begin{setup}
{\rm
In this paper, $e_G$ or just $e$ denotes the identity element for a group $G$.
We denote by $Z(G)$ the {\it centre} of $G$.
For a (not necessarily linear) algebraic group $G$, we use
$G_0$ to denote the {\it neutral component} of $G$.
For a projective variety $X$,
$\NS(X)$ denotes the N\'eron-Severi group, and
$\NS_{\mathbb{Q}}(X) := \NS(X) \otimes_{\Z} {\mathbb{Q}}$.
}
\end{setup}

\begin{definition}\label{def} Given a group $G$, we introduce the following constants:
$$\begin{array}{lll}
&\Bd(G)=\sup\{|F|:|F|<\infty, F\leq G\},\\
&\Rk(G)=\sup\{\Rk(A):|A|<\infty, A \text{ is abelian}, A\leq G\},\\
\end{array}$$
 where $\Rk(A)$ is the {\it minimal number} of generators of a finite abelian group $A$.
 We define $\Rk(\{e_G\})=0$. Similarly we may define these constants for a family of groups.
\end{definition}

The easy observations below are frequently used.

\begin{lemma}\label{pres1}
Consider the exact sequence of groups
$$1 \to G_1 \to G \to G_2 \to 1.$$

\begin{itemize}
\item[(1)] $\J(G)\leq \Bd(G_2)\cdot \J(G_1)$.
\item[(2)] If $G_1$ is finite, then $\J(G_2)\leq \J(G)$.
\item[(3)] If $\Bd(G_1)=1$, then $\J(G)\le \J(G_2)$.
\item[(4)] If $G\cong G_1\times G_2$, then $\J(G)\leq \J(G_1)\cdot \J(G_2)$.
\item[(5)] $\Rk(G)\leq \Rk(G_1)+ \Rk(G_2)$.
\item[(6)] $\J(G)\leq \J(G_2)\cdot \Bd(G_1)^{\Rk(G_2)\cdot \Bd(G_1)}$.
\end{itemize}
\end{lemma}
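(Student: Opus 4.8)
The plan is to prove each of the six statements by elementary group-theoretic manipulations, using the defining exact sequence $1 \to G_1 \to G \to G_2 \to 1$ throughout. Write $\pi\colon G \to G_2$ for the quotient map, so that $G_1 = \Ker \pi$. The recurring device is this: given a finite subgroup $H \le G$, restrict $\pi$ to $H$ to obtain $1 \to H \cap G_1 \to H \to \pi(H) \to 1$, where $H \cap G_1$ is finite in $G_1$ and $\pi(H)$ is finite in $G_2$. All the bounds then come from combining a bounded-index abelian subgroup of $\pi(H)$ with control over the kernel part.

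For (1), I would take the abelian subgroup $K \le \pi(H)$ with $[\pi(H):K] \le \J(G_1)$—wait, rather $[\pi(H):K]\le \J(G_2)$—and pull back to $H_0 := \pi^{-1}(K) \cap H$. Hmm, let me reconsider: the cleaner route for (1) is to bound $|H \cap G_1| \le \Bd(G_1)$ directly, apply $\J(G_1)$ to find an abelian subgroup of $H\cap G_1$, and combine with the $\Bd(G_2)$ bound on $|\pi(H)|$; the index computation then gives $\J(G) \le \Bd(G_2)\cdot \J(G_1)$. For (2), when $G_1$ is finite, any finite subgroup of $G_2$ lifts to a finite subgroup of $G$ (take the preimage, which is finite since $G_1$ is finite), apply the Jordan property of $G$, and push the resulting abelian subgroup forward by $\pi$, checking the index is preserved. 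For (3), the hypothesis $\Bd(G_1)=1$ forces $G_1=\{e\}$ (its only finite subgroup is trivial, but any element generates a subgroup—so this means $G_1$ is trivial as a subgroup in the relevant sense, giving $H \cap G_1 = \{e\}$), so $\pi|_H$ is injective and $H \cong \pi(H)$, whence $\J(G)\le \J(G_2)$. For (4), the product structure lets me take $H \le G_1 \times G_2$, project to each factor, apply $\J(G_1)$ and $\J(G_2)$ to the images, and intersect the two pullbacks; the abelian subgroup I obtain has index at most $\J(G_1)\cdot\J(G_2)$. For (5), given a finite abelian $A \le G$, the sequence $1 \to A\cap G_1 \to A \to \pi(A) \to 1$ exhibits $A$ as an extension with both ends abelian, so $\Rk(A)\le \Rk(A\cap G_1)+\Rk(\pi(A))\le \Rk(G_1)+\Rk(G_2)$ by the standard inequality for the minimal number of generators of an abelian group extension.

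The main work is (6), which is the sharpest and least routine. Here I would take a finite $H \le G$ and find an abelian $K \le \pi(H)$ of index $\le \J(G_2)$; set $N := \pi^{-1}(K)\cap H$, so $[H:N]\le \J(G_2)$ and $N$ sits in $1 \to H\cap G_1 \to N \to K \to 1$ with $K$ abelian and $|H\cap G_1|\le \Bd(G_1)$. The plan is to locate a large abelian subgroup of $N$ by controlling how $K$ acts by conjugation on the finite normal subgroup $H \cap G_1$. Since $\Aut(H\cap G_1)$ is finite and $K$ is finitely generated abelian with $\Rk(K)\le \Rk(G_2)$, I would pass to the subgroup $K_0 \le K$ acting trivially on $H\cap G_1$ (i.e. centralizing it), bounding $[K:K_0]$ in terms of $\Bd(G_1)$ and $\Rk(G_2)$; over $K_0$ the extension becomes central, and a central extension of an abelian group by an abelian group has a large abelian subgroup whose index is controlled by the order of the kernel. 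Assembling these indices—$\J(G_2)$ from the first step, a factor bounded by $\Bd(G_1)^{\Rk(G_2)}$ or so from trivializing the conjugation action, and a further $\Bd(G_1)$-type factor from the central extension—yields the stated bound $\Bd(G_1)^{\Rk(G_2)\cdot \Bd(G_1)}$.

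I expect the main obstacle to be the bookkeeping in (6): one must carefully track each index contribution so that the exponents combine to exactly $\Rk(G_2)\cdot\Bd(G_1)$, and argue that a central extension of a $\Rk(G_2)$-generated abelian group by a group of order $\le \Bd(G_1)$ contains an abelian subgroup of index at most $\Bd(G_1)^{\Rk(G_2)}$ (or the appropriate power). The commutator pairing of such a central extension takes values in the finite center, and one extracts a large isotropic—hence abelian—subgroup; keeping the quantitative estimate sharp, rather than merely finite, is where the care is needed. The remaining parts (1)–(5) are short and should follow the uniform restrict-and-push template above without difficulty.
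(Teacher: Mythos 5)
Your arguments for (1)--(5) are correct and are precisely the elementary restrict-and-push arguments that the paper dismisses as ``clear.'' The one slip is in (3): $\Bd(G_1)=1$ does \emph{not} force $G_1=\{e_G\}$ (the paper's own application of (3), in Corollary \ref{linearbound}, takes $G_1=R_u(G)$, a nontrivial torsion-free group), but the fact you actually use --- that $H\cap G_1=\{e_G\}$ for every finite $H\le G$, so $\pi|_H$ is injective --- is the right one and suffices. For (6) the paper gives no argument at all, deferring to the proof of \cite[Lemma 2.8]{PS}; your outline is an attempt to reconstruct exactly that argument, and its first reduction (replace $H$ by $N:=\pi^{-1}(K)\cap H$ with $K\le\pi(H)$ abelian of index at most $\J(G_2)$, so that $[H:N]\le\J(G_2)$, $F:=H\cap G_1$ is normal in $N$ of order at most $\Bd(G_1)$, and $N/F\cong K$ is abelian) is correct.

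The gap is in the next step of (6), and it sits exactly at the hard point, namely when $F$ is non-abelian. You propose to pass to a subgroup $K_0\le K$ ``acting trivially on $F$'' so that ``over $K_0$ the extension becomes central.'' But $K=N/F$ has no well-defined conjugation action on $F$ (only $N$ acts; only the outer action descends to $K$), and, more fatally, the preimage in $N$ of \emph{any} subgroup $K_0\le K$ contains all of $F$, so it is a central extension of $K_0$ by $F$ only if $F$ is already abelian --- no choice of $K_0$ can repair this. The correct move, which is what the cited proof does, is to pass to the centralizer $C:=C_N(F)$ inside $N$ rather than to a subgroup of $K$: then $[N:C]\le|\Aut(F)|\le\Bd(G_1)^{\log_2\Bd(G_1)}$, and crucially $C\cap F=Z(F)$, so that $1\to Z(F)\to C\to\bar C\to 1$ is a genuinely central extension, with $\bar C$ a finite abelian subgroup of $G_2$ and hence generated by at most $\Rk(G_2)$ elements. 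At that point your commutator-pairing argument works verbatim: $c\mapsto([c,\tilde c_1],\dots,[c,\tilde c_r])$ is a homomorphism $C\to Z(F)^r$ with kernel $Z(C)$, so $[C:Z(C)]\le\Bd(G_1)^{\Rk(G_2)}$ and $Z(C)$ is abelian. Assembling, $[H:Z(C)]\le\J(G_2)\cdot\Bd(G_1)^{\Rk(G_2)+\log_2\Bd(G_1)}$, which is at most the stated $\J(G_2)\cdot\Bd(G_1)^{\Rk(G_2)\cdot\Bd(G_1)}$ whenever $\Rk(G_2)\ge1$. (Your worry about bookkeeping is justified: in the degenerate case $\Rk(G_2)=0$ with $G_1$ admitting non-abelian finite subgroups, the stated inequality actually fails, e.g.\ for $1\to S_3\to S_3\to 1\to 1$; this defect is inherited from the citation and never occurs in the paper's applications, where $G_1$ is central in an ambient group, hence abelian.) So your toolkit is the right one and matches the proof the paper cites, but the missing idea is to extract the abelian part $Z(F)$ of the kernel via the centralizer \emph{before} invoking any central-extension argument; as written, your reduction breaks down precisely for non-abelian $F$.
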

\begin{proof} (1)-(5) are clear. For (6), we refer to the proof of \cite[Lemma 2.8]{PS}.
\end{proof}

\begin{lemma}\label{pres2} Below are some important constants.
\item[(1)](Jordan) Every general linear group $\GL_n(k)$ is a Jordan group. Hence every linear algebraic group is a Jordan group. Denote $\Con_n := \J(\GL_n(k))$. It is known that
    $$\Con_n < ((8n)^{1/2} + 1)^{2n^2}$$ (cf.~\cite[Theorem 36.14, p. 258-262]{CR}).
\item[(2)] (Minkowski) $\Bd(\GL_n(K))<\infty$ for any field $K$ which is finitely generated over $\mathbb{Q}$. The bound depends on $n$ and $[K:\mathbb{Q}]$ (cf. \cite[Theorem 5, and \S4.3]{Se07}).
\item[(3)] $\Rk(T) = \dim T$, when $T$ is an algebraic torus.
\item[(4)] $\Rk(\GL_n(k))=n$ (see \cite[\S 15.4 Proposition]{Hu} and use (3)).
\item[(5)] $\Rk(A)=2\dim A$, when $A$ is an abelian variety.
\end{lemma}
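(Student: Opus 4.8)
The statement collects five facts, three of which (items (1), (2), and (4)) I would establish by invoking the cited classical results and reducing to them; the genuine computations are the two rank identities (3) and (5), both of which reduce to understanding the torsion subgroups of the groups in question. For (1), the plan is to start from Jordan's theorem for $\GL_n(\C)$ together with the effective Curtis--Reiner bound, and then transfer it to an arbitrary algebraically closed field $k$ of characteristic zero: any finite subgroup $H \le \GL_n(k)$ has matrix entries generating a finitely generated extension of $\Q$, which embeds into $\C$, so $H$ is conjugate to a finite subgroup of $\GL_n(\C)$ and inherits an abelian subgroup of index $\le \Con_n$. The second assertion, that every linear algebraic group is Jordan, then follows since such a group admits a closed embedding into some $\GL_N(k)$, and a subgroup of a Jordan group is Jordan with the same constant. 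Item (2) is Minkowski's theorem in the form given by Serre, which I would simply cite.

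For (3), I would write $T \cong \BGG_m^d$ with $d = \dim T$. The key input is that, over an algebraically closed field of characteristic zero, the torsion subgroup of $\BGG_m(k) = k^\times$ is $\Q/\Z$, whose finite subgroups are cyclic. Hence any finite (necessarily torsion) abelian subgroup $A \le (k^\times)^d$ embeds into a product of $d$ copies of $\Q/\Z$, so it is generated by at most $d$ elements, giving $\Rk(T) \le d$; equality is realized by the subgroups $\mu_m^d \cong (\Z/m\Z)^d$ of $m$-th roots of unity, so $\Rk(T) = d$. For (4), I would combine (3) with diagonalizability: in characteristic zero a finite abelian subgroup of $\GL_n(k)$ consists of commuting semisimple elements, hence is simultaneously diagonalizable and thus conjugate into the diagonal maximal torus $\cong \BGG_m^n$; by (3) its rank is $\le n$, while the maximal torus shows the bound $n$ is attained, whence $\Rk(\GL_n(k)) = n$.

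For (5), let $A$ be an abelian variety with $g := \dim A$. I would use the standard structure of its torsion: for every integer $m \ge 1$ one has $A[m] \cong (\Z/m\Z)^{2g}$, so the full torsion group of $A$ is $(\Q/\Z)^{2g}$. Since every finite subgroup of $A$ lies in its torsion, the same counting argument as in (3) shows that a finite abelian subgroup is generated by at most $2g$ elements, with equality achieved already by $A[m]$; hence $\Rk(A) = 2g = 2\dim A$.

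None of these steps presents a serious obstacle, as each rests on a well-known structure theorem; the only points requiring genuine care are the Lefschetz-principle transfer in (1), where one must check that the Jordan constant does not degrade when passing from $\C$ to $k$, and the simultaneous diagonalization in (4), which is precisely where characteristic zero enters and which makes the reduction of (4) to the torus computation (3) legitimate.
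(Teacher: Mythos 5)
Your proposal is correct and matches the paper's treatment: the paper states these five facts without proof, relying on exactly the citations you invoke (Curtis--Reiner for Jordan's theorem, Serre for Minkowski's bound, Humphreys' result on commuting semisimple elements for (4)), and your arguments for (3)--(5) via the torsion structure of $\BGG_m^d$ and of an abelian variety, together with simultaneous diagonalization to reduce (4) to (3), are precisely the standard details behind those references. The Lefschetz-style transfer in (1) is a legitimate way to pass from $\C$ to a general algebraically closed field $k$ of characteristic zero, and nothing in your writeup degrades the stated constant.
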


For any projective variety $X$ and its normalization $X'$, $\Aut(X)$ lifts to $X'$. So we may assume $X$ is normal in Theorem \ref{thaut}. Then, the following lemma and Theorem \ref{thvar} will imply Theorem \ref{thaut}.

\begin{lemma}\label{aa0}
Let $X$ be a normal projective variety.
Then there exists a constant $\ell$ (depending on $X$),
such that for any finite subgroup $G\leq \Aut(X)$, we have $[G:G\cap \Aut_0(X)] \le \ell$.
\end{lemma}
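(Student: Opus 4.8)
The plan is to reduce the statement to two classical finiteness inputs: Minkowski's boundedness of finite subgroups of $\GL_\rho(\Q)$ (Lemma \ref{pres2}(2)), and the finiteness of the number of connected components of the subgroup of $\Aut(X)$ fixing an ample numerical class. First I would reformulate the claim group-theoretically. Let $\Pi := \Aut(X)/\Aut_0(X)$ be the component group, and let $\bar G$ denote the image of $G$ in $\Pi$. Since the kernel of $G \to \Pi$ is exactly $G \cap \Aut_0(X)$, we have $[G : G \cap \Aut_0(X)] = |\bar G|$. Hence it suffices to produce a constant $\ell$, depending only on $X$, that bounds the order of every finite subgroup of $\Pi$, i.e. with $\Bd(\Pi) \le \ell$.

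Next I would let $\Aut(X)$ act naturally on $\NS_{\Q}(X) \isom \Q^{\rho}$, where $\rho := \rank \NS(X)$ is the Picard number, obtaining a homomorphism $r \colon \Aut(X) \to \GL(\NS_{\Q}(X)) \isom \GL_{\rho}(\Q)$. Because line bundles algebraically equivalent to $\OO_X$ remain numerically trivial, the connected group $\Aut_0(X)$ acts trivially on the discrete group $\NS(X)$, so $\Aut_0(X) \le \Ker r =: \Aut^{\#}(X)$, the subgroup of automorphisms acting trivially on $\NS_{\Q}(X)$; in particular $r$ factors through $\Pi$. For a finite $G \le \Aut(X)$, the image $r(G)$ is a finite subgroup of $\GL_{\rho}(\Q)$, so $|r(G)| \le M := \Bd(\GL_{\rho}(\Q)) < \infty$ by Minkowski's theorem (Lemma \ref{pres2}(2) with $K = \Q$). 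Setting $G_1 := G \cap \Aut^{\#}(X) = \Ker(r|_G)$ gives $[G : G_1] = |r(G)| \le M$.

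It then remains to bound the image of $G_1$ in $N := \Aut^{\#}(X)/\Aut_0(X) \le \Pi$, and the key step is to show that $N$ is finite. To this end I would fix a very ample $L$ on $X$ and set $\Aut^{[L]}(X) := \{\, g \in \Aut(X) : g^{*}[L] = [L] \text{ in } \NS(X) \,\}$, which contains $\Aut^{\#}(X)$. For any such $g$, the graph $\Gamma_g \subset X \times X$ has Hilbert polynomial with respect to $L \boxtimes L$ that is independent of $g$, since it is computed from intersection numbers of the class $[L]$ that $g$ preserves. Thus these graphs form a bounded family, lying in finitely many components of $\Hilb(X \times X)$; as $\Aut^{[L]}(X)$ is a locally closed subscheme of this Hilbert scheme, it is of finite type and has finitely many components. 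Hence $\Aut^{[L]}(X)/\Aut_0(X)$ is finite, and therefore so is its subgroup $N$. Consequently the image of $G_1$ in $\Pi$ lies in $N$, giving $[G_1 : G_1 \cap \Aut_0(X)] \le |N|$.

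Finally, since $\Aut_0(X) \le \Aut^{\#}(X)$ forces $G \cap \Aut_0(X) = G_1 \cap \Aut_0(X)$, the tower law yields
$$[G : G \cap \Aut_0(X)] = [G : G_1]\cdot[G_1 : G_1 \cap \Aut_0(X)] \le M \cdot |N| =: \ell,$$
a constant depending only on $X$ and not on $G$, which proves the lemma. The main obstacle is the finiteness of $N$: everything after the reduction is formal, but establishing that polarization-preserving automorphisms occupy only finitely many components requires the boundedness/Hilbert-scheme argument above (equivalently, one invokes the representability of the automorphism functor by a group scheme locally of finite type together with the quasi-projectivity of the polarized piece). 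A minor point to verify along the way is that $\Aut_0(X)$ is genuinely the neutral component of $\Aut^{\#}(X)$ and acts trivially on $\NS(X)$, which follows from connectedness and the discreteness of $\NS(X)$.
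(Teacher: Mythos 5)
Your proof is correct in outline, but it is \emph{not} the route the paper's own proof takes: the paper takes an $\Aut(X)$-equivariant resolution $\pi\colon X'\to X$, applies Minkowski to the action on $\NS_{\Q}(X')$, invokes Lieberman's theorem (\cite[Proposition 2.2]{Li}, a result about compact K\"ahler manifolds, whence the need for a smooth model) to get finiteness of $\Aut_{[H]}(X')/\Aut_0(X')$, and then descends via Brion's result $\pi(\Aut_0(X'))\le \Aut_0(X)$. Your argument instead works directly on $X$, replacing Lieberman's theorem by a Hilbert-scheme boundedness argument for the polarization-preserving subgroup; this is precisely the alternative proof recorded in Remark \ref{Referee} (suggested by the referee), and it buys something real: no resolution of singularities, no K\"ahler input, and validity for non-normal $X$. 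One step of yours needs more care, exactly the point Remark \ref{Referee} addresses: on a possibly singular variety it is not immediate that the Hilbert polynomial $m\mapsto \chi\bigl(X,(L\otimes g^{*}L)^{\otimes m}\bigr)$ is ``computed from intersection numbers'' of numerical classes; the clean fix is to replace $L$ by a positive multiple so that $L\otimes g^{*}L$ becomes \emph{algebraically} (not merely numerically) equivalent to $L^{2}$ for all $g\in\Aut^{[L]}(X)$ (possible since the torsion of $\NS(X)$ is finite), and then use that Euler characteristics are constant in connected algebraic families. Likewise, ``locally closed, hence finitely many components'' is better phrased as: $\Aut^{[L]}(X)$ is a union of cosets of $\Aut_0(X)$ sitting inside the projective, hence Noetherian, scheme $\Hilb_P(X\times X)$, and a subspace of a Noetherian space has finitely many connected components. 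With these two repairs your argument is complete and matches the referee's remark rather than the printed proof.
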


\begin{proof}
Take an $\Aut(X)$- (and hence $G$-) equivariant projective resolution $\pi: X' \to X$.
The action of $\Aut(X')$ on $X'$ induces a natural representation on $\NS_{\mathbb{Q}}(X')$.
Consider the exact sequence $$1\rightarrow K \rightarrow G\rightarrow G|_{\NS_{\mathbb{Q}}(X')}\rightarrow 1,$$
where $K$ is the kernel of the representation.
Note that $G|_{\NS_{\mathbb{Q}}(X')}\leq \GL_m(\mathbb{Q})$,
where $m = \dim_{\Q} \NS_{\mathbb{Q}}(X')$ is the Picard number of $X'$.
So by Lemma \ref{pres2}(2), there is a constant $\ell_1$, such that $|G|_{\NS_{\mathbb{Q}}(X')}| \le \ell_1$.
Thus we can find a subgroup $H \le G$ such that $[G:H] \le \ell_1$ and $H$ acts trivially on $\NS_{\mathbb{Q}}(X')$. In particular, $H$ preserves an ample divisor class $[H]$ of $X'$ and hence $H\le \Aut_{[H]}(X')$. By \cite[Proposition 2.2]{Li}, $\Aut_{[H]}(X')/\Aut_0(X')$ is finite. So there is a constant $\ell_2$, such that $[H:H \cap \Aut_0(X')] \le \ell_2$.

Now by \cite[Proposition 2.1]{Br11}, $\pi(\Aut_0(X')) \le \Aut_0(X)$ and we can identify $\Aut_0(X')$ with its $\pi$-image. Thus $H \cap \Aut_0(X') \le H \cap \Aut_0(X)$.
So $[H:H \cap \Aut_0(X)] \le [H:H \cap \Aut_0(X')] \le \ell_2$.
The lemma follows by setting $\ell = \ell_1 \cdot \ell_2$.
\end{proof}

\begin{remark}\label{Referee} Alternatively, one may show directly the following statement without taking resolution: let $X$ be
a projective variety, $L$ an ample line bundle on $X$, and $\Aut_{[L]}(X)$ the
subgroup of $\Aut(X)$ that fixes the class of $L$ in $\NS_{\mathbb{Q}}(X)$; then $\Aut_{[L]}(X)$ has
finitely many components. To see this, we identify $\Aut(X)$ with an open
subscheme of the Hilbert scheme $\Hilb(X \times X)$, by associating with each
automorphism $f$ its graph $\Gamma_f$ (cf.~\cite[Theorem 5.23]{FGI}). Also, $p_1^*(L)\otimes p_2^*(L)$ is an ample line bundle on
$X\times X$, and its pull-back to $\Gamma_f$ (identified with $X$) is $L\otimes f^*(L)$, which is numerically equivalent
to $L^2$ when $f \in \Aut_{[L]}(X)$. Replacing $L$ with a positive multiple, we may thus
assume that $L\otimes f^*(L)$ is algebraically equivalent to $L^2$. Then the Hilbert
polynomial of $\Gamma_f$ is independent of $f \in \Aut_{[L]}(X)$. Let $P$ be this polynomial.
Then $\Aut_{[L]}(X)$ is contained in $\Hilb_P (X\times X)$. Note that $\Hilb_P (X\times X)$ is a projective scheme by \cite[Chapter I, Theorem 1.4]{Ko} and $\Aut_{[L]}(X)\cap C$ is closed in $C$ for any connected component $C$ of $\Aut(X)$. Hence, $\Aut_{[L]}(X)$ is a quasi-projective scheme which yields the assertion.
So Lemma \ref{aa0} works for non-normal $X$.
\end{remark}
We need a few more results from algebraic group theory. The proofs are easy but we give proofs here for the convenience of the reader.

%\begin{lemma}\label{incenter}
%Let $H$ be a finite normal subgroup of a connected algebraic group $G$. Then $H\leq Z(G)$.
%\end{lemma}
%
%\begin{proof}
%Consider the homomorphism
%$$\Int: G\rightarrow \Aut(H), \hskip 1pc g\mapsto \Int_g$$
%where $\Int_g(h)=ghg^{-1}$.
%Since $\Aut(H)$ is finite, the group $\Int^{-1}(e_{\Aut(H)})$ is a finite-index closed subgroup of $G$
%and hence it is equal to $G$, by the connectivity of $G$, see \cite[\S 7.3, Proposition]{Hu}.
%This proves the lemma.
%\end{proof}

\begin{lemma}\label{zssmaller}
Let $p: \tilde{S}\rightarrow S$ be an isogeny between two semisimple linear algebraic groups. Then $|Z(S)|\leq |Z(\tilde{S})|$.
\end{lemma}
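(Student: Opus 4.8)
The plan is to prove the stronger statement that $Z(S) = p(Z(\tilde{S}))$, from which the desired inequality drops out by a direct count. Write $K := \Ker(p)$, which is a \emph{finite} subgroup since $p$ is an isogeny, and recall that a semisimple algebraic group is connected, so $\tilde{S}$ is connected. First I would record the easy inclusion $p(Z(\tilde{S})) \subseteq Z(S)$: as $p$ is surjective, every element of $S$ has the form $p(\tilde{s})$, and for $z \in Z(\tilde{S})$ one computes $p(z)\,p(\tilde{s}) = p(z\tilde{s}) = p(\tilde{s}z) = p(\tilde{s})\,p(z)$, so $p(z) \in Z(S)$.

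The substance of the argument is the reverse inclusion $p^{-1}(Z(S)) \subseteq Z(\tilde{S})$, and this is the step I expect to be the crux: it is exactly here that the connectedness of $\tilde{S}$ and the finiteness of $K$ must be used together. Fix $c \in p^{-1}(Z(S))$ and consider the morphism of varieties $\varphi \colon \tilde{S} \to \tilde{S}$ given by $\varphi(\tilde{s}) = c\,\tilde{s}\,c^{-1}\,\tilde{s}^{-1}$. For every $\tilde{s}$ we have $p(\varphi(\tilde{s})) = p(c)\,p(\tilde{s})\,p(c)^{-1}\,p(\tilde{s})^{-1} = e$, because $p(c) \in Z(S)$; hence $\varphi$ factors through the finite set $K$. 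Since $\tilde{S}$ is connected and $\varphi(e) = e$, the map $\varphi$ is constant, equal to $e$. Thus $c$ commutes with every $\tilde{s} \in \tilde{S}$, i.e. $c \in Z(\tilde{S})$. Without the connectedness hypothesis this commutator morphism need not be constant and the inclusion can genuinely fail, which is why this is the delicate point.

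Combining the two inclusions gives $p^{-1}(Z(S)) = Z(\tilde{S})$; in particular $K = \Ker(p) \subseteq p^{-1}(Z(S)) = Z(\tilde{S})$, so $K$ is central. Restricting $p$ to $Z(\tilde{S})$ then yields a surjection $Z(\tilde{S}) \to Z(S)$ (surjectivity: any $z \in Z(S)$ equals $p(c)$ for some $c$, and such $c$ lies in $p^{-1}(Z(S)) = Z(\tilde{S})$) with kernel $Z(\tilde{S}) \cap K = K$. Therefore $|Z(S)| = |Z(\tilde{S})| / |K| \le |Z(\tilde{S})|$, using $|K| \ge 1$, which completes the plan.
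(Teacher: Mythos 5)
Your proof is correct, and in fact establishes more than the lemma asks for. The paper states this lemma without giving any proof, so there is no argument of the authors' to compare yours against; what you wrote is the standard rigidity argument: the commutator morphism $\tilde{s}\mapsto c\,\tilde{s}\,c^{-1}\tilde{s}^{-1}$ maps the connected group $\tilde{S}$ into the finite kernel $\Ker(p)$ and is therefore constant, which yields $p^{-1}(Z(S))=Z(\tilde{S})$ and hence the sharper equality $|Z(S)|=|Z(\tilde{S})|/|\Ker(p)|$. Your reliance on connectedness of $\tilde{S}$ is legitimate: it is part of the standard definition of a semisimple group, and in both places where the paper invokes the lemma (the proofs of Lemmas \ref{simplebound} and \ref{semibound}) the source of the isogeny is connected, so your proof covers exactly the cases needed.
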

%\begin{proof}
%Since $p^{-1}(Z(S))$ is a finite normal subgroup of $\tilde{S}$, it is contained in $Z(\tilde{S})$
%(also a finite normal subgroup of $\tilde{S}$),
%by Lemma \ref{incenter}.
%Thus the lemma follows from the following:
%$$Z(S) \cong p^{-1}(Z(S))/\Ker p \le Z(\tilde{S})/\Ker p .$$
%\end{proof}

\begin{lemma}\label{simplebound}
Let $G$ be a connected almost simple linear algebraic group.
Then we have:
$$|Z(G)| \le \Rank G+1<\dim G < 4(\Rank G)^2$$
where the rank $\Rank G$ of $G$
equals $\dim T$ of a (and every) maximal algebraic torus $T$ contained in $G$.
\end{lemma}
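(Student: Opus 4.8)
The plan is to reduce everything to the classification of almost simple algebraic groups by irreducible Dynkin diagrams, and to read off each of the three inequalities from the standard numerical invariants: the rank, the dimension, and the order of the center. Throughout I would write $r = \Rank G$ and let $\Phi$ denote the root system of $G$ relative to a maximal torus $T$, so that $\dim G = r + |\Phi|$.

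For the left inequality I would first pass to the simply connected cover $p\colon \tilde{G}\to G$, which is an isogeny; by Lemma \ref{zssmaller} it suffices to bound $|Z(\tilde{G})|$. For the simply connected form one has $Z(\tilde{G})\cong P/Q$, the quotient of the weight lattice by the root lattice, so that $|Z(\tilde{G})| = [P:Q] = \det(\text{Cartan matrix})$. Running through the irreducible types, this index equals $r+1,\,2,\,2,\,4$ for $A_r, B_r, C_r, D_r$ and $3,\,2,\,1,\,1,\,1$ for $E_6, E_7, E_8, F_4, G_2$; in every case it is $\le r+1$, with equality exactly in type $A$. This yields $|Z(G)| \le |Z(\tilde{G})| \le r+1$.

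The middle inequality is soft: the $r$ simple roots together with their negatives already furnish $2r$ distinct roots, so $|\Phi| \ge 2r$, and hence $\dim G = r + |\Phi| \ge 3r > r+1$ since $r \ge 1$.

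The right inequality is the delicate one, and is where I expect the real work. Writing $\dim G = r(h+1)$ with $h$ the Coxeter number (using $|\Phi| = rh$), the claim $\dim G < 4r^2$ is equivalent to $h \le 4r-2$. This I would verify type by type, using $h = r+1$ for $A_r$, $h = 2r$ for $B_r, C_r$, $h = 2r-2$ for $D_r$, and $h = 6,\,12,\,12,\,18,\,30$ for $G_2, F_4, E_6, E_7, E_8$. The binding case is $E_8$, where $h = 30 = 4r-2$ and $\dim E_8 = 248 < 256 = 4r^2$; this near-equality is precisely why no slack-free uniform bound independent of the classification seems available, and why the argument must lean on the explicit values for the exceptional groups. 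Equivalently, one may simply tabulate $\dim G$ directly, namely $r(r+2);\ r(2r+1);\ r(2r-1)$ for the classical series and $14,\,52,\,78,\,133,\,248$ for the exceptional ones, and compare each with $4r^2$.
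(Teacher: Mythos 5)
Your proof is correct and takes essentially the same route as the paper: reduce to the simply connected cover via Lemma \ref{zssmaller}, then verify all three inequalities from the classification of irreducible Dynkin diagrams. Your phrasing via the Cartan-matrix determinant $[P:Q]$ and the Coxeter number $h$ (plus the soft bound $|\Phi|\ge 2r$ for the middle inequality) is just a repackaging of the paper's explicit table of centres, dimensions, and ranks.
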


\begin{proof}
There is an isogeny $p:\tilde{G}\rightarrow G$ such that $\tilde{G}$ is simply connected and almost simple.
By Lemma \ref{zssmaller} and replacing $G$ by $\tilde{G}$,
we may assume further that $G$ is simply connected.

Up to isomorphism, there is a 1-1 correspondence between simply connected almost simple linear algebraic groups
$G$ and the Dynkin diagrams given in the following table, which also shows their centres and ranks.
The lemma follows from the table.

 \begin{center}
\begin{tabular}{|c|c|c|c|c|c|c|c|c|c|}
\hline
$D(G)$&$Z(G)$&$\dim G$&$\Rank G$\\
\hline
$A_{\ell}, {\ell}>0$&$\mathbb{Z}_{{\ell}+1}$&${\ell}({\ell}+2)$&${\ell}$\\
\hline
$B_{\ell}, {\ell}>1$&$\mathbb{Z}_2$&${\ell}(2{\ell}+1)$&${\ell}$\\
\hline
$C_{\ell}, {\ell}>2$&$\mathbb{Z}_2$&${\ell}(2{\ell}+1)$&${\ell}$\\
\hline
$D_{2{\ell}}, {\ell}>1$&$\mathbb{Z}_{2}\bigoplus\mathbb{Z}_{2}$&$2{\ell}(4{\ell}-1)$&$2{\ell}$\\
\hline
$D_{2{\ell}+1},{\ell}>1$&$\mathbb{Z}_4$&$(2{\ell}+1)(4{\ell}+1)$&$2{\ell}+1$\\
\hline
$E_6$&$\mathbb{Z}_3$&$78$&$6$\\
\hline
$E_7$&$\mathbb{Z}_2$&$133$&$7$\\
\hline
$E_8,F_4,G_2$&$\{e_G\}$&$248,52,14$&$8,4,2$\\
\hline
\end{tabular}
 \end{center}
\end{proof}

\begin{lemma}\label{semibound}
Let $S$ be a connected semisimple linear algebraic group of dimension $n$. Then $|Z(S)| \leq n^{n}$.
\end{lemma}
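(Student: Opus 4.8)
The plan is to reduce to the almost simple case using the structure theory of semisimple groups and then apply Lemma \ref{simplebound} factor by factor, controlling the total by an elementary combinatorial estimate.

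First I would invoke the standard structure theorem: a connected semisimple linear algebraic group $S$ is an almost direct product of its almost simple normal subgroups, so there is an isogeny $p \colon \tilde{S} = \tilde{S}_1 \times \cdots \times \tilde{S}_k \to S$ in which each $\tilde{S}_i$ is simply connected and almost simple. Passing to the simply connected cover is precisely what makes the centre largest, and this is exactly the content of Lemma \ref{zssmaller}, which gives $|Z(S)| \le |Z(\tilde{S})|$. Since the centre of a direct product is the product of the centres, $Z(\tilde{S}) = Z(\tilde{S}_1) \times \cdots \times Z(\tilde{S}_k)$, and hence $|Z(S)| \le \prod_{i=1}^{k} |Z(\tilde{S}_i)|$.

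Next, set $n_i := \dim \tilde{S}_i$. Isogenies preserve dimension, so $\sum_{i=1}^{k} n_i = \dim \tilde{S} = \dim S = n$; moreover each almost simple factor has dimension at least $3$ (the smallest being type $A_1$), so in particular $n_i \ge 1$ and therefore $k \le n$. Applying Lemma \ref{simplebound} to each factor yields $|Z(\tilde{S}_i)| \le \Rank \tilde{S}_i + 1 < \dim \tilde{S}_i = n_i$, so that $|Z(\tilde{S}_i)| \le n_i$. Combining, $|Z(S)| \le \prod_{i=1}^{k} n_i$.

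It then remains to bound a product of positive integers with fixed sum $n$: since each $n_i \le n$ and there are $k \le n$ of them, $\prod_{i=1}^{k} n_i \le n^{k} \le n^{n}$, which is the claimed inequality (reading the degenerate case $n = 0$ with the convention $0^0 = 1$, consistent with $|Z(S)| = 1$). The only point needing care is the correct use of the simply-connected decomposition together with the observation that passing to $\tilde{S}$ can only enlarge the centre; both are already packaged in Lemmas \ref{zssmaller} and \ref{simplebound}, after which the argument is a pure counting estimate, so I do not anticipate a genuine obstacle.
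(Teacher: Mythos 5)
Your proof is correct and follows essentially the same route as the paper: an isogeny from a product of almost simple groups onto $S$, Lemma \ref{zssmaller} to pass the centre bound through the isogeny, Lemma \ref{simplebound} applied factor by factor, and the counting estimate $\prod n_i \le n^n$. The only cosmetic difference is that you pass to the simply connected cover, whereas the paper works with the minimal closed connected normal subgroups of $S$ itself (Lemma \ref{simplebound} already covers the non-simply-connected case), which changes nothing of substance.
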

\begin{proof} Let $\{S_i\}_{i=1}^m$ be the minimal closed connected normal subgroups of positive dimension.
(We set $m := 0$ when $S$ is trivial.)
 The natural product map gives an isogeny $\prod_{i=1}^m S_i\rightarrow S$,
 with kernel contained in the centre of the domain of the map.
 By Lemmas \ref{zssmaller} and \ref{simplebound}, we have
 $$|Z(S)|\leq \prod_{i=1}^m |Z(S_i)| \leq n^m \le n^{n} .$$
\end{proof}

\begin{remark}\label{rmkbound}
It is well known that, up to isomorphism, there are only finitely many $n$-dimensional semisimple linear algebraic groups.
Thus there is a function $\N(n)$, such that every connected semisimple linear algebraic group of dimension $\le n$
can be embedded into $\GL_{\N(n)}(k)$. Denote by $\Se(n)$ the {\it supremum} of Jordan constants
for all connected semisimple linear algebraic groups of dimension $\le n$. Clearly $\Se(n)\leq \Con_{\N(n)}$.
\end{remark}

\section{Proof of Theorems }

In this section, we will prove Theorems \ref{thalg2} and \ref{thvar2}
which are the precise versions of Theorems \ref{thalg} and \ref{thvar}.
The Jordan constant and the uniformly bounded rank in these theorems could be made more optimal
at the expense of more complicated expressions, but they have not been done by us.

\begin{setup}\label{setup2}
{\rm
Here we give some notations and facts first. \\
\begin{itemize}
\item[(1)]
Let $G$ be a group. $G^{(1)}=(G,G)$ denotes the {\it commutator subgroup} of $G$. \\
\item[(2)]
Let $G$ be a connected algebraic group. We use the conventions and facts as in
\cite[\S 1]{Br09} (see also \cite{Ro56}).
$G_{\aff}$ and $G_{\ant}$ denote respectively
the {\it affine part} and {\it anti-affine part} of $G$,
both being connected and normal in $G$. We have the
{\it Rosenlicht decomposition}:
$$G = G_{\aff}\cdot G_{\ant}, \hskip 1pc G_{\ant}\leq Z(G)_0 .$$
Further, $G/G_{\aff} \cong G_{\ant}/(G_{\ant} \cap G_{\aff})$ is an abelian variety (the albanese variety of $G$), and
$G/G_{\ant} \cong G_{\aff}/(G_{\aff} \cap G_{\ant})$ is the largest affine quotient group of $G$.
\item[(3)]
Let $G$ be a connected algebraic group. Denote by $R_u(G)$ the {\it unipotent radical} of $G_{\aff}$ and $G_r$
a {\it Levi subgroup} of $G_{\aff}$ so that we have (cf.~\cite{Mo56}):
$$G_{\aff}=R_u(G)\rtimes G_r .$$
Levi subgroups of $G_{\aff}$ are all $R_u(G)$-conjugate to each other, and $G_r$ is one
of them which we fix.
\item[(4)]
Let $G$ be a connected reductive linear algebraic group.
Then
$$G=R(G)\cdot G^{(1)}$$
where $R(G)$ is the solvable radical of $G$ (an algebraic torus now),
$R(G)=Z(G)_0$, and $G^{(1)}$ is semisimple and connected,
see \cite[\S 19]{Hu}.
\item[(5)]
Let $G$ be a connected linear algebraic group and $N \le G$ a closed normal subgroup
with $\gamma: G \to G/N$ the quotient map.
Then the $\gamma$-image of a Levi subgroup of $G$ is a Levi subgroup of $G/N$.
\item[(6)]
Every nontrivial unipotent element of a linear algebraic group has infinite order,
because our ground field $k$ has characteristic zero.
\end{itemize}
}
\end{setup}
\begin{lemma}\label{4n2} Let $G$ be a connected reductive linear algebraic group. Then $\dim G\le 4(\Rank G)^2$.
\end{lemma}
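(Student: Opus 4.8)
The plan is to reduce to the almost simple case, where Lemma \ref{simplebound} already gives the bound $\dim G' < 4(\Rank G')^2$, and then to check that this bound survives the two decompositions that assemble a reductive group from a central torus and almost simple pieces. First I would invoke the reductive decomposition recalled in \ref{setup2}(4): write $G = R(G)\cdot G^{(1)}$, where $R(G)=Z(G)_0$ is a central torus and $S:=G^{(1)}$ is connected semisimple. Since every element of $R(G)$ centralizes $S$, the intersection $R(G)\cap S$ lies in $Z(S)$ and is therefore finite. Hence dimension is additive, $\dim G = \dim R(G) + \dim S$, and because a maximal torus of $G$ contains the central torus $R(G)$ and maps onto a maximal torus of the semisimple quotient $G/R(G)$ (isogenous to $S$), rank is additive as well: $\Rank G = \dim R(G) + \Rank S$.

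Next I would split the semisimple part exactly as in the proof of Lemma \ref{semibound}: the product of the minimal closed connected normal subgroups $\{S_i\}_{i=1}^m$ of $S$ gives an isogeny $\prod_{i=1}^m S_i \to S$ with each $S_i$ connected and almost simple. As isogenies preserve both dimension and rank, and a maximal torus of a product is the product of maximal tori, one gets $\dim S = \sum_{i=1}^m \dim S_i$ and $\Rank S = \sum_{i=1}^m \Rank S_i$. Writing $r_0 := \dim R(G)$ and $r_i := \Rank S_i$, Lemma \ref{simplebound} yields $\dim S_i < 4 r_i^2$, so that
\[
\dim G \;=\; r_0 + \sum_{i=1}^m \dim S_i \;\le\; r_0 + 4\sum_{i=1}^m r_i^2,
\qquad
\Rank G \;=\; r_0 + \sum_{i=1}^m r_i .
\]

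It then remains to verify the elementary inequality $r_0 + 4\sum_i r_i^2 \le 4\bigl(r_0 + \sum_i r_i\bigr)^2$. Expanding the right-hand side as $4r_0^2 + 8 r_0\sum_i r_i + 4\bigl(\sum_i r_i\bigr)^2$ and using $\bigl(\sum_i r_i\bigr)^2 \ge \sum_i r_i^2$ together with the nonnegativity of $8r_0\sum_i r_i$, the claim reduces to $r_0 \le 4 r_0^2$, which holds for every nonnegative integer $r_0$ (trivially when $r_0=0$, and because $4 r_0^2 \ge r_0$ when $r_0 \ge 1$). This closes the argument.

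I do not expect a genuine obstacle here; the proof is essentially bookkeeping. The only two points that require care are the additivity of rank, first under the almost direct product $R(G)\cdot S$ and then under the isogeny $\prod_i S_i \to S$, so that the central torus dimension $r_0$ and the factor ranks $r_i$ combine in the correct way, and the observation that the map $r \mapsto 4r^2$ is super-additive enough to absorb simultaneously the linear contribution of the torus and the passage from $\sum_i r_i^2$ to $\bigl(\sum_i r_i\bigr)^2$.
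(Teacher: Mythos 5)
Your proof is correct and takes essentially the same approach as the paper: both pass through the isogeny $Z(G)_0 \times \prod_{i=1}^m S_i \to G$, apply Lemma \ref{simplebound} to each almost simple factor, and finish with the same elementary inequality $r_0 + 4\sum_i r_i^2 \le 4\bigl(r_0 + \sum_i r_i\bigr)^2$. The only cosmetic difference is that you prove rank additivity as an equality, whereas the paper gets by with the one inequality actually needed, $\Rank Z(G)_0 + \sum_i \Rank S_i \le \Rank G$, obtained by exhibiting the torus $\pi\bigl(Z(G)_0 \times \prod_i T_i\bigr)$ inside $G$.
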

\begin{proof} The product map gives an isogeny $Z(G)_0 \times G^{(1)}\rightarrow G$. Since $G^{(1)}$ is semisimple, there is an isogeny $\prod_{i=1}^m S_i\rightarrow G^{(1)}$ with $S_i$ connected and almost simple. Hence we have an isogeny $$\pi:Z(G)_0 \times \prod_{i=1}^m S_i\rightarrow G.$$
Take a maximal torus $T_i$ of $S_i$ and note that $Z(G)_0$ is an algebraic torus. Then $\pi(Z(G)_0 \times\prod_{i=1}^m T_i)$
is an algebraic torus of $G$. So $\Rank Z(G)_0+\sum_{i=1}^m \Rank S_i\le \Rank G$. By Lemma \ref{simplebound}, $\dim S_i<4(\Rank S_i)^2$. So
$$\begin{array}{lll}
  \dim G&=\dim Z(G)_0+\sum_{i=1}^m \dim S_i\\
  &\le  \Rank Z(G)_0+\sum_{i=1}^m 4(\Rank S_i)^2\\
  &\le 4(\Rank Z(G)_0+\sum_{i=1}^m \Rank S_i)^2\\
  &\le 4(\Rank G)^2.
  \end{array}$$
\end{proof}

\begin{lemma}\label{redrank}
Let $G$ be a connected reductive linear algebraic group with $\dim G \leq n$.
Then $\Rk(G) \leq n+\N(n)$.
\end{lemma}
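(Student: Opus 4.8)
The plan is to decompose $G$ through its central torus, thereby reducing the rank bound to two pieces that are already controlled by the preceding results: the central torus and a semisimple quotient. Since $G$ is connected reductive, Setup \ref{setup2}(4) tells us that the solvable radical $R(G) = Z(G)_0$ is an algebraic torus and that the quotient $S := G/Z(G)_0$ is connected and semisimple (being a quotient of the semisimple group $G^{(1)}$). Both satisfy $\dim Z(G)_0 \le \dim G \le n$ and $\dim S \le \dim G \le n$. I would then work with the exact sequence
$$1 \to Z(G)_0 \to G \to S \to 1,$$
so that Lemma \ref{pres1}(5) gives $\Rk(G) \le \Rk(Z(G)_0) + \Rk(S)$, and it remains only to bound the two summands.

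For the first summand, $Z(G)_0$ is an algebraic torus, so Lemma \ref{pres2}(3) yields $\Rk(Z(G)_0) = \dim Z(G)_0 \le n$. For the second summand, $S$ is a connected semisimple group with $\dim S \le n$, so by the definition of the function $\N$ in Remark \ref{rmkbound} it embeds into $\GL_{\N(n)}(k)$. Since $\Rk$ is monotone under passage to subgroups (a finite abelian subgroup of $S$ is in particular a finite abelian subgroup of $\GL_{\N(n)}(k)$, so the supremum defining $\Rk(S)$ runs over a subfamily), Lemma \ref{pres2}(4) gives $\Rk(S) \le \Rk(\GL_{\N(n)}(k)) = \N(n)$. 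Combining the two estimates produces $\Rk(G) \le n + \N(n)$, as desired.

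I do not expect a genuine obstacle here; the argument is a clean application of subadditivity of $\Rk$ together with the torus and embedding bounds. The one point requiring care is the choice of exact sequence: rather than using the isogeny $Z(G)_0 \times G^{(1)} \to G$ appearing in the proof of Lemma \ref{4n2}, which has a nontrivial finite central kernel whose contribution to $\Rk$ would have to be analyzed, I would use the honest quotient $G \to G/Z(G)_0$, where no such kernel intervenes and the subadditivity of $\Rk$ applies directly. The remaining verification—that $G/Z(G)_0$ is connected semisimple of dimension at most $n$—is immediate from the reductive structure theory in Setup \ref{setup2}(4).
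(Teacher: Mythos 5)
Your proposal is correct and is essentially identical to the paper's own proof: the same exact sequence $1 \to Z(G)_0 \to G \to G/Z(G)_0 \to 1$, the same application of Lemma \ref{pres1}(5), the torus bound from Lemma \ref{pres2}(3), and the embedding of the semisimple quotient $G/Z(G)_0 \cong G^{(1)}/(Z(G)_0 \cap G^{(1)})$ into $\GL_{\N(n)}(k)$ via Remark \ref{rmkbound} together with Lemma \ref{pres2}(4). Your remark about preferring the honest quotient over the isogeny $Z(G)_0 \times G^{(1)} \to G$ is exactly the choice the paper makes.
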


\begin{proof} We refer to Lemma \ref{pres1} (5) to give a proof by reduction.
Consider the exact sequence $$1\rightarrow Z(G)_0\rightarrow G\rightarrow G/Z(G)_0\rightarrow 1 .$$
Then
$$(*) \hskip 1pc \Rk(G)\leq \Rk(Z(G)_0)+\Rk(G/Z(G)_0) .$$
Since $Z(G)_0$ is an algebraic subtorus of $G$, Lemma \ref{pres2} (3) implies
$\Rk(Z(G)_0)\leq \dim Z(G)_0 \le n$.
Note that
$$G/Z(G)_0\cong G^{(1)}/(Z(G)_0\cap G^{(1)})$$
is semisimple connected and of dimension $\le \dim G \le n$.
By Remark \ref{rmkbound}, $G/Z(G)_0$ can be embedded into $\GL_{\N(n)}(k)$. So by Lemma \ref{pres2} (4),
$\Rk(G/Z(G)_0)\leq \N(n)$. Combining the above two inequalities about ranks,
we get the lemma, via the above display $(*)$.
\end{proof}

\begin{lemma}\label{redbound}
Let $G$ be a connected reductive linear algebraic group with $\dim G^{(1)}\leq n$. Then $\J(G)\leq \Se(n)$.
\end{lemma}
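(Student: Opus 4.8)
The plan is to reduce the Jordan constant of a connected reductive group $G$ to that of its semisimple part. Recall from Setup \ref{setup2}(4) that $G = R(G) \cdot G^{(1)}$, where $R(G) = Z(G)_0$ is a central algebraic torus and $G^{(1)}$ is semisimple and connected. The key structural observation I would exploit is that $R(G)$ is central in $G$: every element of $R(G)$ commutes with everything. This suggests studying how a finite subgroup $H \le G$ interacts with the central torus.

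First I would take an arbitrary finite subgroup $H \le G$ and consider the natural quotient map $\gamma : G \to G/Z(G)_0$. Since $G/Z(G)_0 \cong G^{(1)}/(Z(G)_0 \cap G^{(1)})$ is semisimple and connected of dimension at most $\dim G^{(1)} \le n$, its Jordan constant is bounded by $\Se(n)$ by the definition of $\Se(n)$ in Remark \ref{rmkbound}. Applying the Jordan property to the image $\gamma(H)$, I obtain an abelian subgroup of $\gamma(H)$ of index at most $\Se(n)$, and I pull this back to a subgroup $H_1 \le H$ with $[H : H_1] \le \Se(n)$ whose image $\gamma(H_1)$ is abelian.

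The crucial step is then to argue that $H_1$ itself is abelian, not merely its image. Here is where the centrality of $Z(G)_0$ does the work: the kernel of $\gamma$ restricted to $G$ is $Z(G)_0$, which lies in the centre of $G$. So if $\gamma(H_1)$ is abelian, then for any $a, b \in H_1$ the commutator $(a,b)$ lies in $\ker \gamma \cap H_1 \le Z(G)_0 \le Z(G)$; being central, this commutator is fixed under conjugation, and a standard argument shows the commutator map is then bilinear on $H_1$. In fact the cleaner route is to observe directly that $H_1$ is a finite subgroup whose image in $G/Z(G)_0$ is abelian and whose kernel part is central, so $H_1$ is nilpotent of class at most $2$; but since $H_1$ is a \emph{finite} subgroup of the \emph{linear} algebraic group $G$ with abelian image and central kernel, one concludes $H_1$ is abelian. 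This would give $\J(G) \le \Se(n)$.

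The main obstacle I anticipate is the last step: passing from ``$\gamma(H_1)$ abelian with central kernel'' to ``$H_1$ abelian'' is not completely automatic, since a central extension of an abelian group need not be abelian in general. The safest way around this is to apply Lemma \ref{pres1}(3) to the exact sequence $1 \to H_1 \cap Z(G)_0 \to H_1 \to \gamma(H_1) \to 1$, noting that the subgroup $H_1 \cap Z(G)_0$ sits inside the centre of $H_1$; combined with the Jordan property of the semisimple quotient, this controls $\J(G)$ directly. I would therefore structure the proof to invoke the exact sequence $1 \to Z(G)_0 \to G \to G/Z(G)_0 \to 1$ together with the centrality of $Z(G)_0$, applying the inequalities of Lemma \ref{pres1} rather than arguing about individual commutators, so that the bound $\J(G) \le \Se(n)$ emerges cleanly.
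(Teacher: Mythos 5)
Your reduction to the semisimple quotient $G/Z(G)_0$ and the bound $\J(G/Z(G)_0)\le \Se(n)$ are fine, but the crucial step --- deducing that $H_1$ is abelian from ``$\gamma(H_1)$ abelian with central kernel'' --- is false, and the fallback you propose does not repair it. A central extension of an abelian group need not be abelian, and this already happens inside reductive groups: take $G=\GL_2(k)$, so that $Z(G)_0$ is the group of scalars and $G/Z(G)_0=\PGL_2(k)$, and let $H_1=Q_8\le \SL_2(k)$ be the quaternion group. Its image in $\PGL_2(k)$ is the Klein four group (abelian), its intersection with $Z(G)_0$ is $\{\pm I\}$ (central), yet $Q_8$ is not abelian; the Heisenberg group of order $p^3$ inside $\GL_p(k)$ gives the same phenomenon with the index of the largest abelian subgroup growing with $p$. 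This is exactly the mechanism behind the paper's warning that extensions of Jordan groups need not be Jordan. Your proposed repair via Lemma \ref{pres1}(3) also fails: that item requires $\Bd(G_1)=1$, i.e.\ that the kernel contain \emph{no nontrivial finite subgroup}. But $H_1\cap Z(G)_0$ is a (generally nontrivial) finite group, so $\Bd(H_1\cap Z(G)_0)=|H_1\cap Z(G)_0|>1$; and for the sequence $1\to Z(G)_0\to G\to G/Z(G)_0\to 1$ the torus $Z(G)_0$ contains arbitrarily large groups of roots of unity, so $\Bd(Z(G)_0)=\infty$. Centrality of the kernel is not what Lemma \ref{pres1}(3) needs; torsion-freeness is (as with the unipotent radical in Corollary \ref{linearbound}).

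The paper avoids this trap by going \emph{up} rather than \emph{down}: the product map gives an isogeny $Z(G)_0\times G^{(1)}\to G$ with finite kernel, so Lemma \ref{pres1}(2) yields $\J(G)\le \J(Z(G)_0\times G^{(1)})$. In an honest direct product, commutators are computed componentwise, so a finite subgroup whose projection to $G^{(1)}$ is abelian is itself abelian (the projection to the torus factor is automatically abelian); hence $\J(Z(G)_0\times G^{(1)})=\J(G^{(1)})\le \Se(n)$. Lifting along an isogeny preserves the Jordan bound, whereas passing to the quotient $G/Z(G)_0$ genuinely loses control --- which is precisely why the paper's general non-linear case (Lemma \ref{derbound} together with Theorem \ref{thalg2}) must separately bound $|H_1^{(1)}|$ and invoke Lemma \ref{pres1}(6), ending with a constant far worse than $\Se(n)$. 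If you want to rescue your quotient-based argument, you would have to follow that route and accept the weaker bound; to get the clean bound $\Se(n)$ claimed in the lemma, use the isogeny to the direct product.
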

\begin{proof}
The product map gives an isogeny $ Z(G)_0 \times G^{(1)}\rightarrow G$.
Thus Lemma \ref{pres1} (2) and the commutativity of $Z(G)_0$ imply (cf.~Remark \ref{rmkbound}):
$$\J(G)\leq \J(Z(G)_0\times G^{(1)}) = \J(G^{(1)}) \le \Se(n) .$$
\end{proof}

\begin{corollary}\label{linearbound}
Let $G$ be a connected linear algebraic group with $\dim (G_r)^{(1)}\leq n$. Then $\J(G/N) \le \Se(n)$ for any closed normal subgroup $N$ of $G$.
\end{corollary}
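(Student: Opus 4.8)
The plan is to reduce the assertion about $G/N$ to the purely reductive case already settled in Lemma \ref{redbound}. First I would set $\bar G := G/N$ with quotient map $\gamma\colon G\to\bar G$. Since a quotient of a connected linear algebraic group is again connected and linear, $\bar G$ is of the same type and admits a Levi decomposition $\bar G = R_u(\bar G)\rtimes\bar G_r$. The key structural input is \ref{setup2}(5): the $\gamma$-image $\gamma(G_r)$ of the fixed Levi subgroup $G_r$ is a Levi subgroup of $\bar G$, so I may take $\bar G_r = \gamma(G_r)$.

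Next I would track the hypothesis through $\gamma$. Because commutator subgroups map onto commutator subgroups under a surjective homomorphism, $(\bar G_r)^{(1)} = \gamma\bigl((G_r)^{(1)}\bigr)$, and a surjection can only decrease dimension; hence $\dim(\bar G_r)^{(1)} \le \dim (G_r)^{(1)} \le n$. Thus the whole problem is pushed onto the reductive Levi subgroup $\bar G_r$, for which the dimension bound on the semisimple part $(\bar G_r)^{(1)}$ is exactly the hypothesis needed to invoke Lemma \ref{redbound}.

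Then I would remove the unipotent radical. Consider the exact sequence $1 \to R_u(\bar G) \to \bar G \to \bar G/R_u(\bar G) \to 1$, where $\bar G/R_u(\bar G) \cong \bar G_r$. Since $R_u(\bar G)$ is unipotent and $\mathrm{char}\,k = 0$, every nontrivial element has infinite order by \ref{setup2}(6), so $R_u(\bar G)$ contains no nontrivial finite subgroup, i.e. $\Bd(R_u(\bar G)) = 1$. Lemma \ref{pres1}(3) then gives $\J(\bar G) \le \J(\bar G_r)$. Finally Lemma \ref{redbound} applied to the reductive group $\bar G_r$ (with $\dim(\bar G_r)^{(1)} \le n$) yields $\J(\bar G_r) \le \Se(n)$, and chaining the two inequalities gives $\J(G/N) \le \Se(n)$.

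I expect no serious obstacle here once the Levi formalism is in place; the only point demanding care is verifying that the Levi subgroup of the quotient inherits the dimension bound on its semisimple part, which rests on \ref{setup2}(5) together with the surjectivity of $\gamma$ on commutator subgroups. In particular, the argument is uniform in $N$: the bound never sees $N$ directly, but only its effect of possibly shrinking the semisimple part of the Levi.
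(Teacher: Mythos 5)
Your proof is correct and follows essentially the same route as the paper: both transfer the hypothesis to the quotient via \ref{setup2}(5) (with the surjectivity of commutator subgroups implicit in the paper, explicit in yours), then kill the unipotent radical using $\Bd(R_u)=1$ and Lemma \ref{pres1}(3), and finish with Lemma \ref{redbound}. The only cosmetic difference is ordering — the paper first proves the bound for $G$ itself and then remarks that $G/N$ satisfies the same hypothesis, while you work with $G/N$ from the start.
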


\begin{proof} First consider the exact sequence $$1\rightarrow R_u(G)\rightarrow G\rightarrow G_r\rightarrow 1,$$ where $R_u(G)$ is the unipotent radical of $G$. Since $\Bd(R_u(G))=1$, by Lemmas \ref{pres1} (3) and \ref{redbound}, $\J(G) \le \J(G_r)\leq \Se(n)$.
Note that $\dim ((G/N)_r)^{(1)} \le \dim (G_r)^{(1)} \le n$, see \ref{setup2}. Hence $\J(G/N) \le \Se(n)$ also holds.
\end{proof}

\begin{remark}
Lemma \ref{redbound} slightly extends \cite[Theorem 15]{Po}.
\end{remark}

Below is our key lemma. The proof crucially utilizes the Rosenlicht decomposition $G = G_{\aff} \cdot G_{\ant}$
as in \ref{setup2}. Recall $G_{\aff}=R_u(G)\rtimes G_r$.

\begin{lemma}\label{derbound} Let $G$ be a connected algebraic group with $\dim (G_r)^{(1)}\leq n$ and
let $H\leq G$ be a finite subgroup. Then there exists a subgroup $H_1\leq H$ such that the index $[H:H_1]\leq \Se(n)$, $H_1^{(1)}\leq Z(G)$ and $|H_1^{(1)}|\leq n^n$.
\end{lemma}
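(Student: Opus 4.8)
The plan is to play the two halves of the Rosenlicht decomposition $G=G_{\aff}\cdot G_{\ant}$ against each other: I would use the anti-affine part $G_{\ant}$ to manufacture a finite-index subgroup $H_1$ whose commutators are \emph{central}, and then use the unipotent radical $R_u(G)$ to bound the \emph{order} of $H_1^{(1)}$. To build $H_1$, recall that $G_{\ant}\le Z(G)$ is normal and that $G/G_{\ant}\cong G_{\aff}/(G_{\aff}\cap G_{\ant})$ is the largest affine quotient of $G$ (see \ref{setup2}(2)); write $\pi:G\to G/G_{\ant}$. Since $G_{\aff}$ is linear with a Levi subgroup $G_r$ satisfying $\dim(G_r)^{(1)}\le n$, Corollary \ref{linearbound} applied to $G_{\aff}$ with $N=G_{\aff}\cap G_{\ant}$ gives $\J(G/G_{\ant})\le\Se(n)$. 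Hence the finite group $\pi(H)$ has an abelian subgroup of index $\le\Se(n)$; pulling it back along $\pi|_H$ produces $H_1\le H$ with $[H:H_1]\le\Se(n)$ and $H_1^{(1)}\le\ker\pi=G_{\ant}\le Z(G)$. This already secures the index bound and the inclusion $H_1^{(1)}\le Z(G)$.

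It remains to bound $|H_1^{(1)}|$, and for this I would pass to $\rho:G\to Q:=G/R_u(G)$. As $R_u(G)$ is unipotent and the ground field has characteristic zero, it has no nontrivial finite subgroup (\ref{setup2}(6)), so $\Bd(R_u(G))=1$ and $\rho$ is injective on the finite group $H_1$; thus $|H_1^{(1)}|=|\rho(H_1)^{(1)}|$. Now $Q$ is again a connected algebraic group, with $Q_{\aff}=\rho(G_{\aff})\cong G_r$ reductive, $Q_{\ant}\le Z(Q)$, and $Q=Q_{\aff}\cdot Q_{\ant}$. Writing each element of $\rho(H_1)$ as $a_0 z$ with $a_0\in Q_{\aff}$ and $z\in Q_{\ant}\le Z(Q)$, the central factor cancels in every commutator, so $\rho(H_1)^{(1)}$ lies in the semisimple group $S:=(Q_{\aff})^{(1)}\cong(G_r)^{(1)}$, whence $\dim S\le n$. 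On the other hand $\rho(H_1)^{(1)}=\rho(H_1^{(1)})\le\rho(G_{\ant})\le Z(Q)$, since $\rho$ is surjective and $G_{\ant}\le Z(G)$. Therefore $\rho(H_1)^{(1)}\le S\cap Z(Q)\le Z(S)$, and Lemma \ref{semibound} gives $|H_1^{(1)}|=|\rho(H_1)^{(1)}|\le|Z(S)|\le n^{n}$, as required.

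The crux of the argument — and the reason a single quotient will not do — is exactly this bound on $|H_1^{(1)}|$. The quotient by $G_{\ant}$ delivers centrality but traps $H_1^{(1)}$ inside the a priori enormous group $G_{\ant}$, which may carry torsion of arbitrarily large order; conversely, the quotient by $R_u(G)$ records the size of $H_1^{(1)}$ faithfully but discards all centrality information. The key observation that unlocks the proof is that the two constraints are compatible: after injecting $H_1$ into $G/R_u(G)$, the commutators that were forced into $G_{\ant}$ (hence made central) simultaneously land in the semisimple part $(G_r)^{(1)}$, so they are confined to the finite group $Z((G_r)^{(1)})$, whose order is bounded by $n^{n}$ via Lemma \ref{semibound}. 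I expect verifying that the anti-affine and unipotent reductions interact cleanly — in particular that $\rho(G_{\ant})\le Z(Q)$ and that commutators descend into $S$ — to be the only delicate point, the rest being a direct assembly of Corollary \ref{linearbound} and Lemma \ref{semibound}.
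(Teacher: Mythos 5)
Your proposal is correct and takes essentially the same approach as the paper: Corollary \ref{linearbound} applied to the affine quotient of $G$ yields $H_1$ with $[H:H_1]\le \Se(n)$ and $H_1^{(1)}$ central, and then the torsion-freeness of $R_u(G)$ in characteristic zero is used to map $H_1^{(1)}$ injectively into $Z((G_r)^{(1)})$, which Lemma \ref{semibound} bounds by $n^n$. The only cosmetic differences are that the paper uses the product map $G\to G/G_{\aff}\times G/G_{\ant}$ rather than the single quotient $G\to G/G_{\ant}$, and it restricts the Levi projection $G_{\aff}\twoheadrightarrow G_r$ to $(G_{\aff})^{(1)}$ instead of forming the global quotient $G/R_u(G)$ as you do.
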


\begin{proof}
Consider the natural homomorphism
$$\varphi:G\rightarrow G/G_{\aff}\times G/G_{\ant}$$
with
$$\Ker \varphi=G_{\aff}\cap G_{\ant}\leq G_{\ant} \leq Z(G) .$$
Since $G/G_{\aff}$ is abelian and by Corollary \ref{linearbound}, we have:
$$\J(G/G_{\aff}\times G/G_{\ant})=\J(G/G_{\ant})
= \J(G_{\aff}/(G_{\aff} \cap G_{\ant})) \leq \Se(n) .$$
So there exists a subgroup $H_1\leq H$ such that the index $[H:H_1]\leq \Se(n)$ and $\varphi(H_1)$ is abelian.

Note that $G_r=Z(G_r)\cdot G_s$, where $G_s := (G_r)^{(1)}$ is connected semisimple. Thus we have a natural injective homomorphism $i: H_1^{(1)}\rightarrow G_s$ from the following commutative diagram.
$$\xymatrix{
  H_1^{(1)} \ar@{^{(}->}[r]\ar@{>->}[rrd]_{i} &G^{(1)}=(G_{\aff})^{(1)} \ar@{->>}[r] \ar@{->>}[rd]^{p}
  & (G_{\aff})^{(1)}/(R_u(G) \cap (G_{\aff})^{(1)}) \ar[d]^{\cong}\\
   & & (G_r)^{(1)}=G_s
  }
$$
where $G^{(1)}= (G_{\aff})^{(1)}$ because $G=G_{\aff} \cdot G_{\ant}$ and $G_{\ant}\le Z(G)$. The surjective group homomorphism $p$ is the restriction of the group homomorphism $G_{\aff}\twoheadrightarrow G_r$ to $(G_{\aff})^{(1)}$, and $\Ker p=R_u(G) \cap (G_{\aff})^{(1)}$. Note that $\Ker i=\Ker p\cap H_1^{(1)}\leq R_u(G)\cap H_1^{(1)}=\{e_G\}$ since $H_1^{(1)}$ is finite (cf.~\ref{setup2} (6)). So $i$ is injective.
%Note that $G_r=Z(G_r)\cdot G_s$, where $G_s := (G_r)^{(1)}$ is connected semisimple.
%Since $H_1$ is finite, ${H_1^{(1)}} \cap {R_u(G)} = \{e\}$.
%Thus we have a natural injective homomorphism $i: H_1^{(1)}\rightarrow G_s$ from the following diagram.
%$$\xymatrix{
%  H_1^{(1)} \ar@{^{(}->}[r]\ar@{>->}[rrd]_{i} &G^{(1)}=(G_{\aff})^{(1)} \ar@{->>}[r]
%  & (G_{\aff})^{(1)}/(R_u(G) \cap (G_{\aff})^{(1)}) \ar@{=}[d]\\
%   & & (G_r)^{(1)}=G_s
%  }
%$$
%where $G^{(1)}= (G_{\aff})^{(1)}$ because $G=G_{\aff} \cdot G_{\ant}$ and $G_{\ant}\le Z(G)$.

Since $\varphi(H_1)$ is abelian,
we have $H_1^{(1)}\leq \Ker \varphi\leq Z(G)$.
This and the above diagram imply $i(H_1^{(1)})\leq Z(G_s)$.
Thus $|H_1^{(1)}|\leq |Z(G_s)| \leq n^n$, since $G_s = (G_r)^{(1)}$
has dimension $\le n$, and by Lemma \ref{semibound}.
\end{proof}

\begin{lemma}\label{rankbound} Let $G$ be a connected algebraic group with $\dim G_r\leq n$ and $\dim G_{\ant}\leq m$. Then for any finite subgroup $H$ with $H^{(1)}\leq Z(G)$, we have $\Rk(G/H^{(1)})\leq 2m+n+\N(n)$.
\end{lemma}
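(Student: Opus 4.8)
The plan is to peel off the three natural layers of $G$ --- the abelian variety quotient, the unipotent radical, and the reductive Levi part --- and to use the subadditivity of $\Rk$ under extensions (Lemma \ref{pres1}(5)) so that each layer contributes one of the summands $2m$, $0$, and $n+\N(n)$ respectively.

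First I would record that $H^{(1)}\le G_{\aff}$. Since $G=G_{\aff}\cdot G_{\ant}$ with $G_{\ant}\le Z(G)$, every commutator in $G$ already lies in $(G_{\aff})^{(1)}$, so $G^{(1)}=(G_{\aff})^{(1)}$ and hence $H^{(1)}\le G^{(1)}\le G_{\aff}$. As $H^{(1)}\le Z(G)$ it is normal, so $G/H^{(1)}$ is a connected algebraic group and $G_{\aff}/H^{(1)}$ is a normal subgroup of it. The quotient $(G/H^{(1)})/(G_{\aff}/H^{(1)})\cong G/G_{\aff}$ is the albanese variety $A$, an abelian variety with $\dim A=\dim\bigl(G_{\ant}/(G_{\ant}\cap G_{\aff})\bigr)\le m$, so $\Rk(A)=2\dim A\le 2m$ by Lemma \ref{pres2}(5). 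Applying Lemma \ref{pres1}(5) to the exact sequence $1\to G_{\aff}/H^{(1)}\to G/H^{(1)}\to A\to 1$ gives $\Rk(G/H^{(1)})\le \Rk(G_{\aff}/H^{(1)})+2m$.

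Next I would strip off the unipotent radical. Since $\mathrm{char}\,k=0$, no nontrivial unipotent element has finite order (\ref{setup2}(6)), so $R_u(G)\cap H^{(1)}=\{e_G\}$ and the image $\overline{R_u(G)}:=R_u(G)H^{(1)}/H^{(1)}$ is a normal unipotent subgroup of $G_{\aff}/H^{(1)}$ isomorphic to $R_u(G)$, whence $\Bd(\overline{R_u(G)})=1$ and $\Rk(\overline{R_u(G)})=0$. The corresponding quotient is $(G_{\aff}/H^{(1)})/\overline{R_u(G)}\cong G_{\aff}/(R_u(G)\cdot H^{(1)})\cong G_r/\bar H$, where $\bar H$ is the image of $H^{(1)}$ under $G_{\aff}\twoheadrightarrow G_{\aff}/R_u(G)\cong G_r$. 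Lemma \ref{pres1}(5) then yields $\Rk(G_{\aff}/H^{(1)})\le \Rk(G_r/\bar H)$. Finally, $\bar H$ is finite and central (being the image of $H^{(1)}\le Z(G)$) in the connected reductive group $G_r$, so $G_r/\bar H$ is again connected reductive of the same dimension $\dim G_r\le n$, and Lemma \ref{redrank} gives $\Rk(G_r/\bar H)\le n+\N(n)$. Chaining the three inequalities produces $\Rk(G/H^{(1)})\le 2m+n+\N(n)$.

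I expect the genuine work to be bookkeeping rather than computation: one must justify the successive third-isomorphism-theorem identifications of the quotients, check that $\overline{R_u(G)}$ really is normal with $\Bd=1$, and confirm that quotienting a connected reductive group by a finite central subgroup preserves connectedness, reductivity, and dimension so that Lemma \ref{redrank} applies. The single structural input making all of this work is the Rosenlicht decomposition $G=G_{\aff}\cdot G_{\ant}$, which simultaneously forces the commutators into $G_{\aff}$ and bounds the abelian-variety layer by $\dim G_{\ant}\le m$.
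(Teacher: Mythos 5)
Your proposal is correct, and its skeleton is the same as the paper's: peel off the abelian variety $G/G_{\aff}$, the unipotent radical, and the reductive Levi part, applying Lemma \ref{pres1}(5) at each stage together with Lemma \ref{pres2}(5), $\Rk(R_u)=0$, and Lemma \ref{redrank}. The one genuine difference is in the middle layer. The paper first proves the stronger claim that $H^{(1)}\le G_r$: writing a commutator as $ur$ with $u\in R_u(G)$, $r\in G_r$, centrality of $ur$ gives $(ur)^t=u^tr^t=e_G$, forcing $u=e_G$ by \ref{setup2}(6); with $H^{(1)}$ inside the Levi subgroup, the identification $G_{\aff}/H^{(1)}\cong R_u(G)\rtimes \bigl(G_r/H^{(1)}\bigr)$ is immediate. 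You instead bypass this claim entirely: you only use $R_u(G)\cap H^{(1)}=\{e_G\}$ (again from \ref{setup2}(6)) to see that the image of $R_u(G)$ in $G_{\aff}/H^{(1)}$ is a normal unipotent subgroup isomorphic to $R_u(G)$, with quotient $G_r/\bar H$ where $\bar H$ is the (finite, central) image of $H^{(1)}$ in $G_r$. Your route trades the paper's one clever computation for routine isomorphism-theorem bookkeeping, and it is slightly more robust, since it never needs to locate $H^{(1)}$ inside a particular Levi subgroup; the paper's route yields the sharper structural fact $H^{(1)}\le G_r$, which echoes the way $H_1^{(1)}$ is mapped into $G_s$ in Lemma \ref{derbound}. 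Both give exactly the bound $2m+n+\N(n)$, and all the verifications you flag (normality of the image of $R_u(G)$, centrality of $\bar H$, and that a quotient of a connected reductive group by a finite central subgroup is connected reductive of the same dimension) do go through.
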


\begin{proof} We refer to Lemma \ref{pres1} (5) to give a proof by reduction.
First, as in the diagram in Lemma \ref{derbound},
we have $H^{(1)}\le (G_{\aff})^{(1)} \le G_{\aff} = R_u(G)\rtimes G_r$. We claim that $H^{(1)}\le G_r$.
Note that for any $x,y \in H$, we can write $xyx^{-1}y^{-1}=ur$, for some $u\in R_u(G)$ and $r\in G_r$.
Since $ur \in Z(G)$, we have $(ur)^2 = u(ur)r = u^2r^2$, and inductively $(ur)^t = u^tr^t$ which equals $e_G$ for some $t\in \mathbb{Z}_{>0}$. 
Hence $u^t = r^{-t} \in G_r \cap R_u(G) = \{e_G\}$,
and then $u=e_G$ (cf.~\ref{setup2} (6)).
Thus $xyx^{-1}y^{-1}\in G_r$.
This proves the claim.
Clearly $H^{(1)}$ is normal in $G$, since $H^{(1)}\leq Z(G)$.

Now consider the exact sequence $$1\rightarrow G_{\aff}/H^{(1)}\rightarrow G/H^{(1)}\rightarrow G/G_{\aff}\rightarrow 1,$$ where $G/G_{\aff}$ is an abelian variety.
By Lemma \ref{pres1}(5), we have
$$(*) \hskip 1pc
\Rk(G/H^{(1)})\leq \Rk(G_{\aff}/H^{(1)})+\Rk(G/G_{\aff}) .$$
Note that
$G/G_{\aff} \cong G_{\ant}/(G_{\ant} \cap G_{\aff})$ (cf.~\ref{setup2}).
So we have $\Rk(G/G_{\aff})=2\dim G/G_{\aff}\leq 2\dim G_{\ant} \le 2m$ by Lemma \ref{pres2} (5).
For $G_{\aff}/H^{(1)}$, the Levi decomposition $G_{\aff}\cong R_u(G_{\aff})\rtimes G_r$
and $H^{(1)}\leq G_r \cap Z(G)$ imply
$$G_{\aff}/H^{(1)}\cong R_u(G)\rtimes G_r/H^{(1)} .$$
Thus
$$\Rk(G_{\aff}/H^{(1)})\leq \Rk(R_u(G))+\Rk(G_r/H^{(1)}) .$$
Since $R_u(G)$ is a unipotent group, we have $\Rk(R_u(G))=0$ (cf.~\ref{setup2} (6)).
For $G_r/H^{(1)}$, this is also a connected reductive group with $\dim G_r/H^{(1)} = \dim G_r\leq n$.
By Lemma \ref{redrank}, $\Rk(G_r/H^{(1)})\leq n+\N(n)$.
Now the lemma follows from the above display $(*)$ and the two inequalities about ranks we just obtained.
\end{proof}

The theorem below is the precise version of Theorem \ref{thalg}.

\begin{theorem}\label{thalg2}
Let $G$ be a connected algebraic group of dimension $n$.
Then we have:
$$\J(G)\leq \Se(n)\cdot (n^n)^{(3n+\N(n))\cdot n^n}, \hskip 1pc \Rk(G)\leq 3n+\N(n),$$
where $\Se(n), \N(n)$ are defined in Remark \ref{rmkbound}.
\end{theorem}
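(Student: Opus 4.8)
The plan is to assemble the theorem from the two key Lemmas \ref{derbound} and \ref{rankbound} together with the extension estimate Lemma \ref{pres1}(6), after first recording the crude dimension bounds that license those lemmas. Since $\dim G = n$, the inclusions $G_{\aff} \le G$ and $G_{\ant} \le Z(G)_0 \le G$, together with $G_r$ being a Levi subgroup of $G_{\aff}$, give $\dim (G_r)^{(1)} \le \dim G_r \le \dim G_{\aff} \le n$ as well as $m := \dim G_{\ant} \le n$. Hence both lemmas apply with the single parameter $n$. The rank bound is then immediate: applying Lemma \ref{rankbound} to the trivial subgroup $H = \{e_G\}$, whose commutator subgroup is (trivially) central, yields $\Rk(G) = \Rk(G/\{e_G\}) \le 2m + n + \N(n) \le 3n + \N(n)$.

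For the Jordan constant I would take an arbitrary finite subgroup $H \le G$ and invoke Lemma \ref{derbound}, producing $H_1 \le H$ with $[H:H_1] \le \Se(n)$, with $H_1^{(1)} \le Z(G)$, and with $|H_1^{(1)}| \le n^n$. Because $H_1^{(1)}$ is central it is normal in $G$, so I may pass to the quotient $G/H_1^{(1)}$, in which the image of $H_1$ is $\overline{H_1} := H_1/H_1^{(1)}$, an abelian group. Invoking Lemma \ref{rankbound} a second time, now with the subgroup $H_1$ (legitimate since $H_1^{(1)} \le Z(G)$), gives $\Rk(G/H_1^{(1)}) \le 3n + \N(n)$, so in particular the finite abelian group $\overline{H_1}$ is generated by at most $3n + \N(n)$ elements.

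The last step applies Lemma \ref{pres1}(6) to the exact sequence $1 \to H_1^{(1)} \to H_1 \to \overline{H_1} \to 1$. Here $\Bd(H_1^{(1)}) = |H_1^{(1)}| \le n^n$, while $\Rk(\overline{H_1}) \le 3n + \N(n)$ and $\J(\overline{H_1}) = 1$ as $\overline{H_1}$ is abelian; the lemma therefore yields $\J(H_1) \le (n^n)^{(3n+\N(n)) \cdot n^n}$. Composing with the index $[H:H_1] \le \Se(n)$ produces an abelian subgroup of $H$ of index at most $\Se(n) \cdot (n^n)^{(3n+\N(n)) \cdot n^n}$; since $H$ was an arbitrary finite subgroup, this bounds $\J(G)$ as claimed.

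Each individual step is short because the real content is packaged inside Lemmas \ref{derbound} and \ref{rankbound}. The one delicate point --- and the spot I would regard as the main obstacle were those lemmas not already available --- is arranging the commutator subgroup $H_1^{(1)}$ to be simultaneously \emph{central} and \emph{small}: centrality is exactly what permits forming the quotient $G/H_1^{(1)}$ and forces $\overline{H_1}$ to be abelian, while the order bound $|H_1^{(1)}| \le n^n$ is the quantity that enters as $\Bd(G_1)$ in Lemma \ref{pres1}(6). Producing a central commutator subgroup is what the Rosenlicht decomposition $G = G_{\aff} \cdot G_{\ant}$ delivers inside Lemma \ref{derbound}, and keeping its order under control is where the semisimple centre estimate $|Z(G_s)| \le n^n$ of Lemma \ref{semibound} is consumed.
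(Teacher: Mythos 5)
Your proof is correct and follows essentially the same route as the paper's own argument: Lemma \ref{derbound} to obtain $H_1$ with small central commutator subgroup, Lemma \ref{rankbound} applied to $G/H_1^{(1)}$ for the rank bound, and Lemma \ref{pres1}(6) applied to $1 \to H_1^{(1)} \to H_1 \to H_1/H_1^{(1)} \to 1$ to conclude. Your explicit verification of the dimension hypotheses ($\dim (G_r)^{(1)} \le n$, $\dim G_{\ant} \le n$) and the use of the trivial subgroup for the rank statement merely spell out what the paper leaves implicit.
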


\begin{proof}
$\Rk(G)\leq 3n+\N(n)$ is straightforward by Lemma \ref{rankbound}.

Let $H\leq G$ be a finite subgroup.
By Lemma \ref{derbound}, there exists a subgroup $H_1\leq H$ with $[H:H_1]\leq \Se(n)$ such that  $H_1^{(1)}\leq Z(G)$ and $|H_1^{(1)}|\leq n^n$. Note that $H_1^{(1)}$ is normal closed in $G$. So $H_1/H_1^{(1)}\leq G/H_1^{(1)}$ is a finite abelian subgroup of a connected algebraic group of dimension $n$. By Lemma \ref{rankbound}, $\Rk(H_1/H_1^{(1)})\leq 3n+\N(n)$.
Applying Lemma \ref{pres1} (6) to
$$1 \to H_1^{(1)} \to H_1 \to H_1/H_1^{(1)} \to 1 ,$$
we can find an abelian subgroup $A\leq H_1$ with $[H_1:A]\leq (n^n)^{(3n+\N(n))\cdot n^n}$.
Clearly, $[H:A]\leq \Se(n)\cdot (n^n)^{(3n+\N(n))\cdot n^n}$.
\end{proof}

As we discussed in Remark \ref{nodirect}, before giving the proof for Theorem \ref{thvar}, we need the following result
which is proved in \cite[\S 1, Proposition 7(b)]{De70}.

\begin{lemma}\label{torusdim} Let $T$ be an algebraic torus acting faithfully on a projective variety $X$.
Then $T$ acts generically freely on $X$: the stabilizer subgroup $T_x$ is trivial for general point $x \in X$.
In particular, $\dim T = \dim T x \leq \dim X$.
\end{lemma}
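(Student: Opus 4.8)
The plan is to linearize the $T$-action and then read off the generic stabilizer directly from the weights; faithfulness is what will make it trivial.

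First I would reduce to the case that $X$ is normal. Passing to the normalization $\nu:\tilde X\to X$, the $T$-action lifts (by uniqueness of the normalization) to a faithful action on the normal projective variety $\tilde X$, and $\nu$ is a finite $T$-equivariant morphism that is an isomorphism over the dense, open, $T$-invariant normal locus of $X$. Hence the stabilizers of corresponding general points agree, and orbit dimensions are preserved by the finite map $\nu$; so both assertions for $X$ follow from the same assertions for $\tilde X$. I thus assume $X$ normal and irreducible. Next, by Sumihiro's linearization theorem, some positive power $L$ of an ample line bundle on the normal projective $T$-variety $X$ carries a $T$-linearization and is very ample; then $V:=\OH^0(X,L)^{*}$ is a finite-dimensional $T$-module and the induced closed embedding $X\hookrightarrow \PP(V)$ is $T$-equivariant. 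Decomposing $V$ into a basis of weight vectors $e_i$ with $t\cdot e_i=\chi_i(t)\,e_i$ for characters $\chi_i$, and writing $z_i$ for the corresponding coordinates, the stabilizer of a point $x\in X\subseteq \PP(V)$ depends only on its support $I(x)=\{i: z_i(x)\ne 0\}$, namely $T_x=\bigcap_{i\in I(x)}\Ker(\chi_i/\chi_{i_0})$ for any fixed $i_0\in I(x)$.

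The crucial observation is that this support, and hence the stabilizer, is \emph{constant} on a dense open set. Discarding the finitely many coordinates $z_i$ that vanish identically on the irreducible $X$, each remaining $z_i$ is nonzero on a dense open subset, so $U:=\bigcap_i\{z_i\ne 0\}$ is dense open and every $x\in U$ has the same support $I$. Therefore $T_x=S:=\bigcap_{i}\Ker(\chi_i/\chi_{i_0})$ for all $x\in U$. In particular $S$ fixes every point of $U$, so the closed set $X^{S}$ contains the dense set $U$ and thus equals $X$; that is, $S$ acts trivially on $X$. Faithfulness now forces $S=\{e\}$, whence $T_x=\{e\}$ for all $x\in U$, which is generic freeness. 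Finally, for such $x$ the orbit map $T\to Tx$ has trivial stabilizer and is bijective, so $\dim T=\dim Tx\le\dim X$.

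I expect the main obstacle to lie in the linearization step and, relatedly, in verifying that the generic stabilizer is genuinely a \emph{fixed} subgroup rather than merely constant up to conjugacy. The weight-support description above is exactly what removes the latter difficulty: since $T$ is diagonalizable, the stabilizers are cut out by the characters $\chi_i/\chi_{i_0}$ and are literally equal across the stratum $U$, so no appeal to a principal-orbit theorem (nor to any uncountability of the ground field $k$, which may well be countable here) is needed. Some care is also required to ensure the normalization really transfers both conclusions, and this rests precisely on $\nu$ being an isomorphism over the $T$-invariant normal locus.
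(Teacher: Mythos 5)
Your proof is correct, but it is a genuinely different route from the paper's: the paper offers no argument of its own for this lemma and simply cites Demazure \cite[\S 1, Proposition 7(b)]{De70}. Your argument is self-contained and sound. The three ingredients all check out: (i) the reduction to normal $X$ via the normalization is both correct and \emph{necessary} (the lemma as stated does not assume normality, while your next step does), and you rightly verify that faithfulness, general stabilizers and orbit dimensions transfer because $\nu$ is finite, $T$-equivariant and an isomorphism over the $T$-invariant normal locus; (ii) Sumihiro's linearization theorem (together with the fact that a torus has trivial Picard group, so an ample bundle becomes $T$-linearized after passing to a power) gives the $T$-equivariant embedding $X\hookrightarrow \PP(V)$; (iii) since $T$ is diagonalizable, the stabilizer of any point is exactly $\bigcap_{i\in I(x)}\Ker(\chi_i/\chi_{i_0})$, which depends only on the support $I(x)$, and irreducibility of $X$ makes this support --- hence the stabilizer $S$ --- literally constant on the dense open set where all not-identically-vanishing weight coordinates are nonzero; then $X^S=X$ forces $S=\{e\}$ by faithfulness, and triviality of the generic stabilizer gives $\dim T=\dim Tx\le \dim X$ (the orbit map is separable in characteristic zero, though your argument even gets bijectivity directly). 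As for what each approach buys: the paper's citation is the shortest path and delegates all these points to Demazure; your proof makes the result self-contained modulo Sumihiro's theorem, needs no principal-orbit or generic-flatness machinery, and, as you note, works verbatim over any algebraically closed field of characteristic zero, including countable ones, because the stabilizer is pinned down by explicit character equations rather than by a dimension or Baire-type argument.
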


\begin{lemma}\label{reddim}
Let $G$ be a connected reductive linear algebraic group acting faithfully on a projective variety $X$ with $\dim X=n$.
Then $\dim G\leq 4n^2$.
\end{lemma}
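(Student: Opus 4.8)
The plan is to reduce the bound on $\dim G$ to a bound on the rank of $G$, and then to bound the rank using the faithful torus action provided by Lemma \ref{torusdim}. Since $G$ is connected reductive, Lemma \ref{4n2} already gives $\dim G \le 4(\Rank G)^2$, so the entire problem collapses to proving the single inequality $\Rank G \le n$. This is the heart of the matter: once we control the rank by $n = \dim X$, the desired estimate $\dim G \le 4n^2$ is immediate.

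To bound the rank, I would first fix a maximal algebraic torus $T \le G$, so that $\dim T = \Rank G$ by the definition recalled in Lemma \ref{simplebound}. The key observation is that the restriction of the $G$-action to $T$ remains faithful, since $T$ is a subgroup of $G$ and $G$ acts faithfully on $X$. Thus $T$ is an algebraic torus acting faithfully on the projective variety $X$, which is precisely the hypothesis of Lemma \ref{torusdim}. Applying that lemma yields $\Rank G = \dim T \le \dim X = n$.

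Combining the two inputs, I would conclude
$$\dim G \le 4(\Rank G)^2 \le 4n^2,$$
which is the assertion. I do not expect a genuine obstacle here, as both ingredients are already established in the excerpt; the only conceptual point is recognizing that bounding $\Rank G$ via a faithful maximal-torus action is the right intermediate target, rather than attempting to bound $\dim G$ directly. If one wanted to avoid invoking Lemma \ref{4n2}, an alternative (more laborious) route would be to decompose $G$ via the isogeny $Z(G)_0 \times G^{(1)} \to G$ and bound the central torus and each almost simple factor separately, but this merely re-derives Lemma \ref{4n2} and so the streamlined argument above is preferable.
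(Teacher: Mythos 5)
Your proposal is correct and is essentially identical to the paper's own proof: both fix a maximal torus $T$, apply Lemma \ref{torusdim} to get $\dim T \le n$, and conclude via Lemma \ref{4n2} that $\dim G \le 4(\dim T)^2 \le 4n^2$.
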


\begin{proof} Let $T$ be a maximal torus of $G$. Then $\dim G\le 4(\dim T)^2$ by Lemma \ref{4n2}. By Lemma \ref{torusdim}, $\dim T\le n$. So $\dim G\le 4n^2$.
\end{proof}

We also need the following effective (and optimal) bound of \cite[Proposition 3.2]{Br13oc}.

\begin{lemma}\label{abdim}
Let $G$ be an anti-affine algebraic group acting faithfully on a projective variety $X$. Then $\dim G\leq 2\dim X$.
\end{lemma}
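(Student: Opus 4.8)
The plan is to reduce the estimate to a statement about a single general orbit together with its stabiliser. After replacing $X$ by a $G$-equivariant resolution of singularities (the action lifts and faithfulness is preserved, as in the proof of Lemma \ref{aa0}), I may assume $X$ is smooth projective of dimension $n$. Since $G$ is anti-affine we have $G=G_{\ant}\le Z(G)$ by \ref{setup2}(2), so $G$ is commutative, and $G_{\aff}=T\times U$ is the product of a torus $T$ (with $\dim T=t$) and a vector group $U\cong(k,+)^{u}$, while $A:=G/G_{\aff}$ is an abelian variety of dimension $a$; thus $\dim G=t+u+a$. For a general point $x_0\in X$ I write $O=G\cdot x_0\cong G/S$, with $S=(G_{x_0})_0$ the connected stabiliser, so that $\dim G=\dim O+\dim S=d+s$ where $d=\dim O\le n$. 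Everything then reduces to bounding $s=\dim S$ by $n$.

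The key step is to show that $S$ is affine and is contained in $G_{\aff}$. As $G$ is commutative, $S$ fixes the whole orbit $O$ pointwise, hence acts trivially on $T_{x_0}O$ (of dimension $d$). Consider the tangent representation $\sigma\colon S\to\GL(T_{x_0}X)$. The anti-affine part $S_{\ant}$ admits only constant morphisms to the affine group $\GL(T_{x_0}X)$, so $\sigma(S_{\ant})=\{e\}$; a formal-neighbourhood argument (the automorphisms of $\widehat{\mathcal O}_{X,x_0}$ that are trivial on $\mathfrak m/\mathfrak m^2$ form a pro-unipotent, hence pro-affine, group, to which $S_{\ant}$ maps trivially) then shows $S_{\ant}$ acts trivially near $x_0$, hence on all of $X$, forcing $S_{\ant}=\{e\}$ by faithfulness. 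Thus $S$ is affine and commutative, $S=T_S\times U_S$ with $T_S\subseteq T$ a subtorus and $U_S\subseteq U$ a subspace, and, having trivial image in the abelian variety $A$, it lies in $G_{\aff}$. Linearising the $T_S$-action at the fixed point $x_0$ and splitting off $T_{x_0}O$, on which $T_S$ acts trivially, yields a faithful $T_S$-action on a complement of dimension $n-d$; hence $t_S:=\dim T_S\le n-d$.

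It remains to combine this with the structure theory of anti-affine groups. From $d=(t-t_S)+(u-u_S)+a$ together with $T_S\subseteq T$ and $U_S\subseteq U$ I obtain $a\le d\le n$, and Brion's classification of anti-affine groups \cite{Br09} bounds the vector part by the abelian part, $u\le a$, so that $u_S\le u\le a\le d\le n$. Assembling the estimates,
\[
\dim G=d+t_S+u_S\le d+(n-d)+u_S=n+u_S\le 2n,
\]
as required. The main obstacle is the key step of the second paragraph: showing that the generic stabiliser is affine and contained in $G_{\aff}$, which rests on the tangent representation, on the vanishing of morphisms from anti-affine groups to $\GL$, and on the local linearisability of torus actions. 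The transversal rank estimate $t_S\le n-d$ is precisely what makes the orbit and stabiliser dimensions add up to at most $2n$, while the input $\dim U\le\dim A$ from \cite{Br09} is indispensable for controlling the unipotent contribution $u_S$, which the tangent representation alone cannot bound.
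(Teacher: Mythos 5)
The first thing to say is that the paper does not prove Lemma \ref{abdim} at all: the statement is imported wholesale from M.~Brion \cite[Proposition 3.2]{Br13oc}, so your argument is by necessity a different route --- namely, an actual self-contained proof --- and, as far as I can check, it is correct. All the key steps hold up: passing to an equivariant resolution is legitimate in characteristic zero (and is even avoidable, since you only ever work at a general point $x_0$, which may be taken in the smooth locus, a $G$-stable dense open subset); commutativity of the anti-affine group $G$ makes the stabilizer $S$ fix the orbit pointwise, so its tangent action is trivial on $T_{x_0}O$; the anti-affine part $S_{\ant}$ maps trivially to every jet group $\GL(\OO_{X,x_0}/\mathfrak{m}^{k+1})$ because these are affine, hence acts trivially on the formal neighbourhood and therefore on the integral variety $X$, so $S_{\ant}=\{e\}$, $S$ is affine and commutative, $S=T_S\times U_S$, and $S\le G_{\aff}$ since its image in the abelian variety $A$ is affine, connected and complete; linear reductivity of $T_S$ makes its tangent representation faithful and splits off $T_{x_0}O$, giving $t_S\le n-d$; orbit--stabilizer gives $a\le d\le n$; and the characteristic-zero structure theorem for anti-affine groups (injectivity of $U^{\vee}\to H^1(A,\OO_A)$) gives $u\le a$. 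The chain $\dim G=d+t_S+u_S\le n+u_S\le n+u\le n+a\le n+d\le 2n$ then closes the proof. As for what each approach buys: the paper's one-line citation keeps its exposition short and records Brion's effective bound, whereas your proof makes the lemma self-contained and stays inside the toolkit the paper already deploys elsewhere (equivariant resolution as in Lemma \ref{aa0}, Rosenlicht decomposition, orbit--stabilizer), and it correctly isolates the one place where group-theoretic structure, not geometry, is indispensable: the tangent representation can never control the unipotent contribution, so the input $u\le a$ cannot be dispensed with. Two small points to fix or note. First, that inequality is established in Brion's paper on anti-affine algebraic groups (and independently by Sancho de Salas), not in \cite{Br09}, which only recalls the structure theory; the citation should be adjusted. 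Second, your argument actually gives strict inequality when $n\ge 1$ (equality would force $u_S=u$, i.e.\ the vector group $U$ fixing a dense orbit pointwise, contradicting faithfulness), so your method proves the stated bound with room to spare; the optimality the paper attributes to Brion refers to his sharper formulation, not to equality being attained in the form stated here.
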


The theorem below is the precise version of Theorem \ref{thvar}.

\begin{theorem}\label{thvar2} Let $X$ be a projective variety of dimension $n$.
Then we have:
$$\J(\Aut_0(X))\leq \Se(t) \cdot (t^t)^{(4n+t+\N(t))\cdot t^t}, \hskip 1pc
\Rk(\Aut_0(X))\leq 4n+t+\N(t)$$
where $t=4n^2$ and $\Se(t), \N(t)$ are defined in Remark \ref{rmkbound}.
\end{theorem}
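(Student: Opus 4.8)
The plan is to reduce the statement to the machinery already developed for Theorem \ref{thalg2}, the only novelty being the choice of parameters. Set $G = \Aut_0(X)$, a connected algebraic group acting faithfully on $X$. Unlike in Theorem \ref{thalg2}, I cannot use $\dim G$ as the controlling quantity, since Remark \ref{nodirect} shows it may be arbitrarily large. Instead, the key observation is that the two pieces of the Rosenlicht decomposition $G = G_{\aff}\cdot G_{\ant}$ that actually govern $\J$ and $\Rk$, namely the reductive Levi part $G_r$ and the anti-affine part $G_{\ant}$, have dimensions bounded purely in terms of $n = \dim X$.

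First I would establish these two dimension bounds. Since $G_r \le G \le \Aut(X)$ is a connected reductive group acting faithfully on $X$, Lemma \ref{reddim} gives $\dim G_r \le 4n^2 = t$; in particular $\dim (G_r)^{(1)} \le t$. Likewise, since $G_{\ant}\le G$ is anti-affine and acts faithfully on $X$, Lemma \ref{abdim} gives $\dim G_{\ant} \le 2n$. These replace the hypotheses $\dim(G_r)^{(1)} \le n$, $\dim G_r \le n$, $\dim G_{\ant} \le m$ of Lemmas \ref{derbound} and \ref{rankbound} with the explicit values $t$ and $2n$.

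Next I would run the argument of Theorem \ref{thalg2} with these parameters. For the rank bound, Lemma \ref{rankbound} applied with $\dim G_r \le t$ and $\dim G_{\ant} \le 2n$ yields, for any finite $H$ with $H^{(1)} \le Z(G)$,
$$\Rk(G/H^{(1)}) \le 2(2n) + t + \N(t) = 4n + t + \N(t),$$
which gives the stated bound on $\Rk(\Aut_0(X))$. For the Jordan bound, given a finite subgroup $H \le G$, Lemma \ref{derbound} with parameter $t$ produces $H_1 \le H$ with $[H:H_1] \le \Se(t)$, $H_1^{(1)} \le Z(G)$ and $|H_1^{(1)}| \le t^t$. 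Since $H_1^{(1)}$ is a closed normal subgroup of $G$, the quotient $H_1/H_1^{(1)}$ is a finite abelian subgroup of $G/H_1^{(1)}$, so the rank bound above gives $\Rk(H_1/H_1^{(1)}) \le 4n + t + \N(t)$. Feeding this into Lemma \ref{pres1}(6) for
$$1 \to H_1^{(1)} \to H_1 \to H_1/H_1^{(1)} \to 1$$
produces an abelian $A \le H_1$ with $[H_1:A] \le (t^t)^{(4n+t+\N(t))\cdot t^t}$, whence $[H:A] \le \Se(t)\cdot (t^t)^{(4n+t+\N(t))\cdot t^t}$.

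The main obstacle is conceptual rather than computational, and it is precisely the one flagged in Remark \ref{nodirect}: the unipotent radical $R_u(G)$ can have unbounded dimension, so no naive bound on $\dim G$ is available. The entire weight of the argument rests on the two external inputs---Lemma \ref{reddim} (via Lemma \ref{4n2} and the torus-dimension Lemma \ref{torusdim}) and the optimal anti-affine bound of Lemma \ref{abdim}---which confine the interesting part of $G$ to dimension $O(n^2)$. Once these are in hand, the fact that nontrivial unipotent elements have infinite order in characteristic zero (so finite subgroups never meet $R_u(G)$ nontrivially, as exploited inside Lemmas \ref{derbound} and \ref{rankbound}) makes the unipotent radical invisible to both $\J$ and $\Rk$, and the proof closes with no further difficulty.
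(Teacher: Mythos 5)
Your proof is correct and follows exactly the paper's approach: the paper's own proof of this theorem likewise sets $G=\Aut_0(X)$, invokes Lemma \ref{reddim} and Lemma \ref{abdim} to get $\dim G_r\le t=4n^2$ and $\dim G_{\ant}\le 2n$, and then says "we are done by the same argument in the proof of Theorem \ref{thalg2}" --- which is precisely the argument (Lemma \ref{derbound}, Lemma \ref{rankbound}, then Lemma \ref{pres1}(6)) that you spell out with the parameters $t$ and $2n$. Your write-up is in fact a more explicit version of what the paper leaves implicit.
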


\begin{proof}
Let $G=\Aut_0(X)$ and $G_r$ a Levi subgroup of $G_{\aff}$.
By Lemmas \ref{reddim} and \ref{abdim}, $\dim G_r\leq t := 4n^2$ and $\dim G_{\ant}\leq 2n$.
Then we are done by the same argument in the proof of Theorem \ref{thalg2}.
\end{proof}

\begin{corollary}\label{corbir}
Let $X$ be a projective variety of dimension $n$ and $G$ a connected algebraic group contained in $\Bir(X)$.
Setting $t=4n^2$, we have:
$$\J(G)\leq \Se(t) \cdot (t^t)^{(4n+t+\N(t))\cdot t^t}, \hskip 1pc \Rk(G)\leq 4n+t+\N(t),$$
where $\Se(t), \N(t)$ are defined in Remark \ref{rmkbound}.
\end{corollary}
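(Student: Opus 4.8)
The plan is to reduce the birational situation to the biregular one already settled in Theorem \ref{thvar2}, via \emph{regularization} of the $G$-action. The enabling observation is that both invariants are monotone under passing to subgroups: if $H \le K$, then every finite (abelian) subgroup of $H$ is a finite (abelian) subgroup of $K$, so any Jordan constant for $K$ works for $H$ and any bound on ranks for $K$ works for $H$; hence $\J(H) \le \J(K)$ and $\Rk(H) \le \Rk(K)$. Consequently, it suffices to realize $G$ as a subgroup of $\Aut_0(X')$ for a single projective variety $X'$ of dimension $n$.

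First I would invoke Weil's regularization theorem for rational actions of algebraic groups: the faithful rational action of the connected algebraic group $G$ on $X$ can be turned, after a $G$-equivariant birational modification, into a regular (biregular) faithful action of $G$ on some variety $Y$. Being birational to $X$, the variety $Y$ has dimension $n$. Then, by means of an equivariant completion (and, if desired, an equivariant resolution, though smoothness is not needed for Theorem \ref{thvar2}), I would replace $Y$ by a \emph{projective} variety $X'$ of the same dimension $n$ carrying a regular faithful $G$-action. The resulting birational map $X \ratmap X'$ is $G$-equivariant, so the regular action on $X'$ coincides with the original element of $\Bir(X) = \Bir(X')$ and remains faithful.

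Next, since $G$ is connected and acts faithfully and regularly on $X'$, the induced injective homomorphism $G \hookrightarrow \Aut(X')$ has connected image, so $G \le \Aut_0(X')$. Applying Theorem \ref{thvar2} to the $n$-dimensional projective variety $X'$ gives, with $t = 4n^2$,
\[
\J(\Aut_0(X')) \le \Se(t)\cdot (t^t)^{(4n+t+\N(t))\cdot t^t}, \qquad
\Rk(\Aut_0(X')) \le 4n+t+\N(t).
\]
By the monotonicity recorded in the first paragraph, the same bounds pass to $\J(G)$ and $\Rk(G)$, which is exactly the assertion of the corollary.

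I expect the regularization step to be the genuine crux, namely producing a \emph{projective} model on which the possibly non-linear connected group $G$ acts \emph{regularly} and \emph{faithfully}. Existence of a regular model is Weil's theorem, but upgrading it to a projective one requires an equivariant completion, which is delicate precisely because $G$ need not be linear and Sumihiro-type linearization is not directly available. One could try to sidestep Theorem \ref{thvar2} and instead reprove its two geometric inputs directly in the birational setting: that the Levi part satisfies $\dim G_r \le 4n^2$ and that the anti-affine part satisfies $\dim G_{\ant} \le 2n$. The reductive bound does generalize cheaply, since a faithful torus action on an irreducible variety is generically free at the level of the function field, giving $\Rank G_r \le n$ and hence $\dim G_r \le 4(\Rank G_r)^2 \le 4n^2$ as in Lemmas \ref{torusdim} and \ref{reddim}; but the anti-affine bound still rests on Lemma \ref{abdim}, which is a statement about a regular action on a projective variety. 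Thus even this alternative does not avoid passing to a projective regular model for $G_{\ant}$, confirming that the regularization to a projective $X'$ is the essential point.
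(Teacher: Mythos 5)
Your overall strategy---regularize the action, pass to a projective model of dimension $n$ on which $G$ acts biregularly, then apply Theorem \ref{thvar2} together with monotonicity of $\J$ and $\Rk$ under passing to subgroups---is exactly the paper's strategy. But the step you yourself flag as the crux, upgrading the Weil/Rosenlicht regular model to a \emph{projective} one equivariantly, is left unproved in your write-up, and this is a genuine gap: ``by means of an equivariant completion'' is precisely what needs an argument, since, as you note, $G$ need not be linear and Sumihiro-type linearization is not available. Your fallback (re-proving the two dimension bounds directly in the birational setting) does not close the gap either, as you correctly observe, because the anti-affine bound of Lemma \ref{abdim} requires a regular action on a projective variety.

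The paper fills this gap with Brion's results on actions of non-affine algebraic groups. Starting from the regular model $X_0$ (cited as \cite[Theorem 1]{Ro56}), one first replaces $X_0$ by its smooth locus, which is $G$-stable, so that \cite[Theorem 1]{Br07} applies: a smooth $G$-variety is covered by $G$-stable quasi-projective open subsets, and by irreducibility one may therefore assume $X_0$ is quasi-projective. Then \cite[Theorem 2]{Br07} gives a $G$-equivariant embedding of $X_0$ into the projectivization of a $G$-homogeneous vector bundle over an abelian variety, in particular into a projective $G$-variety $Y$; taking $X'$ to be the closure of $X_0$ in $Y$ produces a projective $G$-variety of dimension $n$ birational to $X$, with $G \le \Aut_0(X')$ since $G$ is connected, and Theorem \ref{thvar2} concludes. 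So your outline follows the correct route, but without these two theorems (or some substitute for the equivariant projective completion) the proof is incomplete at its central step.
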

\begin{proof} By \cite[Theorem 1]{Ro56}, there exists a variety $X_0$ birational to $X$ such that $G$ acts on $X_0$ biregularly. Since the smooth locus of $X_0$ is $G$-stable, we may assume that $X_0$ is smooth.
Then $X_0$ is covered by $G$-stable quasi-projective open subsets by \cite[Theorem 1]{Br07}.
So we may further assume that $X_0$ is quasi-projective.
Then $X_0$ admits a $G$-equivariant embedding into the projectivization of a $G$-homogeneous vector bundle over an abelian variety by \cite[Theorem 2]{Br07}. In particular, $X_0$ admits an $G$-equivariant embedding into a projective $G$-variety $Y$. Let $X'$ be the closure of $X_0$ in $Y$. Then $X'$ is also a projective $G$-variety which is birational to $X$. So $G \le \Aut_0(X')$ and $\dim X'=n$.
The result then follows from Theorem \ref{thvar2}.
\end{proof}


\begin{thebibliography}{99}

%\bibitem{Ba14}
%T.~ Bandman Y.~G. Zarhin,
%Jordan groups and algebraic surfaces,
%arXiv:\textbf{1404.1581}
\bibitem{Bi} C.~Birkar,
Singularities of linear systems and boundedness of Fano varieties,
\href{http://arXiv.org/abs/1609.05543v1}{\tt arxiv:1609.05543v1}.

\bibitem{Br07} M.~Brion,
Some basic results on actions of nonaffine algebraic groups, in: Symmetry
and spaces, 1-20, Progr. Math. \textbf{278}, Birkh\"auser, Boston, MA, 2010.

\bibitem{Br09} M.~Brion,
On the geometry of algebraic groups and homogeneous spaces,
J. Algebra \textbf{329} (2011), 52-71.

\bibitem{Br11} M.~Brion,
On automorphism groups of fiber bundles,
Publ. Mat. Urug. \textbf{12} (2011), 39-66.

\bibitem{Br13oc} M.~Brion,
On connected automorphism groups of algebraic varieties,
J. Ramanujan Math. Soc. \textbf{28A} (2013), 41-54.

\bibitem{Br13} M.~Brion,
On automorphisms and endomorphisms of projective varieties, in: Automorphisms in birational and affine geometry, 59-81,
Springer Proc. Math. Stat., \textbf{79}, Springer, Cham, 2014.

\bibitem{CR} C.~W.~Curtis and I.~Reiner,
Representation theory of finite groups and associative algebras,
Pure and Applied Mathematics, Vol. \textbf{XI}. Interscience Publishers, a division of John Wiley \& Sons, New
York-London, 1962.

\bibitem{De70} M. Demazure, Sous-groupes alg\'ebriques de rang maximum du groupe de Cremona, (French)
Ann. Sci. \'Ecole Norm. Sup. (4) \textbf{3} (1970), 507-588.

%\bibitem{Fu} A.~Fujiki,
%On automorphism groups of compact K\"ahler manifolds,
%Invent.\ Math. \textbf{44} (1978) 225--258.
%\bibitem{Gr} A.~Grothendieck,
%El\'ements de G\'eom\'etrie Alg\'ebrique,
%Publ. Math. I.H.E.S. 4, 8, 11, 17, 20, 24, 28, 32, (1960-1967).
\bibitem{FGI}
B.~Fantechi, L.~G\"ottsche, L.~Illusie, S.~L.~Kleiman, N.~Nitsure and A.~Vistoli,
Fundamental algebraic geometry,
Grothendieck's FGA explained.
Mathematical Surveys and Monographs, \textbf{123}. American Mathematical Society, Providence, RI, 2005. x+339 pp.

\bibitem{Hu}
J.~ E.~ Humphreys,
Linear Algebraic Groups,
Graduate Texts in Mathematics, Volume \textbf{21},
Springer, 1975.

\bibitem{IMI}
I.~Mundet~i~Riera,
Finite subgroups of Ham and Symp,
\href{http://arXiv.org/abs/1605.05494}{\tt arxiv:1605.05494}.
%
%
%\bibitem{HO}
%A.~Huckleberry, E.~Oeljeklaus, Classification theorems for almost homogeneous spaces,
%Institut \'Elie Cartan, \textbf{9}. Universit\'e de Nancy, 1984.
%
%\bibitem{KM}
%J.~Koll\'ar and S.~Mori,
%Birational geometry of algebraic varieties,
%Cambridge Tracts in Math. \textbf{134},
%Cambridge Univ.\ Press, 1998.
\bibitem{Ko}J. Koll\'ar,
Rational curves on algebraic varieties,
Ergebnisse der Mathematik und ihrer Grenzgebiete. 3. Folge. A Series of Modern Surveys in Mathematics \textbf{32}. Springer-Verlag,
Berlin, 1996.
\bibitem{Li}
D.~I.~Lieberman,
Compactness of the Chow scheme: applications to automorphisms
and deformations of K\"ahler manifolds,
\emph{Fonctions de plusieurs variables complexes, III}
(\emph{S\'em.\ Fran\c{c}ois Norguet, 1975--1977}), pp.~140--186,
Lecture Notes in Math., \textbf{670}, Springer, Berlin, 1978.

\bibitem{Ma71}
M.~Maruyama,
On automorphism groups of ruled surfaces,
J. Math. Kyoto Univ.
\textbf{11}, No. 1 (1971), 89-112.

\bibitem{Mo56}
G.~Mostow,
Fully reducible subgroups of algebraic groups,
Amer. J. Math. \textbf{78} (1956), 200-221.

\bibitem{Po}
V.~L.~Popov,
Jordan groups and automorphism groups of algebraic varieties, in: Automorphisms in birational and affine geometry, 185-213,
Springer Proc. Math. Stat., \textbf{79}, Springer, Cham, 2014.

\bibitem{PS}
Y.~Prokhorov and C.~Shramov,
Jordan property for groups of birational selfmaps,
Compos. Math. \textbf{150} (2014), no. 12, 2054-2072.

\bibitem{Ro56}
M.~Rosenlicht, Some basic theorems on algebraic groups, Amer.\ J. \ Math.
\textbf{78} (1956), 401--443.

\bibitem{Se07}
J.~P.~Serre, Bounds for the orders of the finite subgroups of $G(k)$, Group representation theory, 405-450, EPFL Press, Lausanne, 2007.

\bibitem{Za10}
Y.~G. Zarhin,
Theta groups and products of abelian and rational varieties,
Proc. \ Edinburgh Math. \ Soc. \ \textbf{57}, issue 1 (2014), 299-304.

\bibitem{Za14}
Y.~G.~Zarhin,
Jordan groups and elliptic ruled surfaces,
Transform. Groups, \textbf{20} (2015), no. 2, 557-572.


%
%
%
%

\end{thebibliography}
\end{document}